\newtheorem{satz}{Theorem}
\newtheorem{proposition}[satz]{Proposition}
\newtheorem{theorem}[satz]{Theorem}
\newtheorem{lemma}[satz]{Lemma}
\newtheorem{corollary}[satz]{Corollary}
\newtheorem{remark}[satz]{Remark}
\def\_phi{\varphi}
\def\a{\alpha}
\def\d{\delta}
\def\la{\lambda}
\def\v{\vec}
\def\F{{\mathbb F}}
\def\L{\Lambda}
\def\t{\tilde}
\def\C{{\mathbb C}}
\def\R{{\mathbb R}}
\def\E{\mathsf {E}}
\def\T{{\mathbb T}}
\def\Z_N{{\mathbb Z}_N}
\def\Z{{\mathbb Z}}
\def\Gr{{\mathbf G}}
\def\D{{\mathbb D}}
\def\c{\circ}
\def\D{\Delta}
\def\T{\mathsf {T}}
\author{Shkredov I.D.}
\title{
Some remarks on sets with small quotient set
\footnote{
This work is supported by the Russian Science Foundation under a grant 14-50-00005.}
}
\date{}
\begin{document}
\maketitle

\begin{center}
 Annotation.
\end{center}

{\it \small
    We prove, in particular, that for any finite set $A\subset \R$ with $|A/A| \ll |A|$ one has
    $|A-A| \gg |A|^{5/3 - o(1)}$.
    Also we show that $|3A| \gg |A|^{2-o(1)}$ in the case.
}
\\

\section{Introduction}
\label{sec:introduction}

Let  $A,B\subset \R$ be finite sets.
Define the  \textit{sum set}, the \textit{product set} and the \textit{quotient set} of $A$ and $B$ as
$$A+B:=\{a+b ~:~ a\in{A},\,b\in{B}\}\,,$$
$$AB:=\{ab ~:~ a\in{A},\,b\in{B}\}\,,$$
and
$$A/B:=\{a/b ~:~ a\in{A},\,b\in{B},\,b\neq0\}\,,$$
correspondingly.
Sometimes we write $kA$ for multiple sumsets, difference and so on, e.g. $A+A+A = 3A$.
The Erd\"{o}s--Szemer\'{e}di  conjecture \cite{ES} says that for any  $\epsilon>0$ one has
\begin{equation}\label{conj:ES}
    \max{\{|A+A|,|AA|\}}\gg{|A|^{2-\epsilon}} \,.
\end{equation}
Modern bounds concerning the conjecture can be found in
\cite{soly}, \cite{KS1}, \cite{KS2}.
The first interesting case of Conjecture (\ref{conj:ES}) was proved in \cite{ER}, see also \cite{soly}, namely
$$
    |A + A| \ll |A| \quad \mbox{ or } \quad |A-A| \ll |A| \implies |AA| \gg |A|^{2-\epsilon}
        \quad \mbox{ or } \quad |A/A| \gg |A|^{2-\epsilon}  \,.
$$
The opposite situation is
wide
open and it is called sometimes a {\it weak Erd\"{o}s--Szemer\'{e}di Conjecture} \cite{R}.
So, it is unknown
\begin{equation}\label{conj:ES_weak}
    |AA| \ll |A| \quad \mbox{ or } \quad |A/A| \ll |A| \implies |A+A| \gg |A|^{2-\epsilon}
        \quad \mbox{ or } \quad |A-A| \gg |A|^{2-\epsilon}  \,?
\end{equation}
The best current lower bounds on the size of sumsets of sets $A$ with small $AA$ or $A/A$ are contained in \cite{KS1}, \cite{KS2}.
As for difference sets it was proved in \cite{soly_14/11},
\cite{Li_R-N}
that
$$
    |AA| \ll |A| \implies |A-A| \gg |A|^{14/11-\epsilon} \quad \mbox{ and } \quad |A/A| \ll |A| \implies |A-A| \gg |A|^{8/5-\epsilon} \,.
$$
The integer
situation
was considered in \cite{Chang_Z} (in the paper M.--C. Chang has deal with the case of multiple sumsets as well).

\bigskip

Let us formulate the first main result of our paper (see Theorem \ref{t:5/3} below).

\begin{theorem}
    Let $A\subset \R$ be a finite set.
    Then
$$
    |A/A| \ll |A| \implies |A-A| \gg |A|^{5/3-\epsilon} \,.
$$
\end{theorem}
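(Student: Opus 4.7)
The plan is to adapt Solymosi's incidence argument, which under $|A/A| \ll |A|$ only yields $|A-A| \gg |A|^{3/2}$, and to improve it by a higher-moment refinement in the spirit of Konyagin--Shkredov.

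Write $M = |A/A|/|A|$, so $M \ll 1$. The Pl\"unnecke--Ruzsa inequality for ratios yields $|A^n/A^m| \ll M^{O(1)} |A|$ for all fixed $n,m$, and hence the multiplicative energy satisfies $\E^{\times}(A) \geq |A|^4/|A/A| \gg |A|^3/M$. Geometrically, the $|A|^2$ points of $A \m A$ lie on the lines through the origin with slopes in $A/A$: writing $n_\la := |A \cap \la A|$, we have $\sum_\la n_\la = |A|^2$ and $\sum_\la n_\la^2 = \E^{\times}(A) \gg |A|^3/M$. A dyadic pigeonholing then produces a level set $L \subset A/A$ on which $n_\la$ is essentially constant, equal to some $\D$, with $|L|\D \gg |A|^2/\log |A|$ and $|L|\D^2 \gg |A|^3/(M \log |A|)$.

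Arranging the slopes of $L$ in increasing order and applying Solymosi's adjacent-slope trick --- differences of points taken from two consecutive lines all land in $(A-A) \m (A-A)$, and the pairs coming from different adjacent pairs contribute disjointly --- gives the baseline bound $|A-A|^2 \gg |L|\D^2 \gg |A|^3/M$, i.e.\ $|A-A| \gg |A|^{3/2-o(1)}$. To push past $3/2$ the plan is to exploit triples of consecutive slopes rather than pairs: collinear triples of points, one on each of three consecutive lines, encode triples of coinciding additive differences, whose abundance is governed by the third moment
$$
\T_3^{+}(A) \; := \; \sum_{x} |A \cap (A+x)|^3 \,.
$$
A H\"older/convexity step should convert the incidence information into an inequality schematically of the form
$$
|A-A|^3 \cdot \T_3^{+}(A) \; \gg \; M^{-O(1)} |A|^{8-o(1)} \,.
$$

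The new ingredient, and the main obstacle, is an upper bound of the form $\T_3^{+}(A) \ll M^{O(1)} |A|^{3+o(1)}$ under the quotient-set hypothesis. I would prove this by a further Szemer\'edi--Trotter incidence count on a cleverly chosen family of points drawn from $(A-A) \m (A-A)$ and affine lines whose slopes lie in $A/A$, so that the multiplicative structure of $A$ is carried by the slope parameter while the additive combinatorics is recorded by the points. Substituting this upper bound back into the previous inequality yields $|A-A|^3 \gg M^{-O(1)} |A|^{5-o(1)}$, which is the target $|A-A| \gg |A|^{5/3-o(1)}$. The delicate step is to set up the $\T_3^{+}$-incidence configuration so that both Pl\"unnecke for ratios and Szemer\'edi--Trotter can be applied simultaneously without losing logarithmic factors large enough to destroy the $5/3$ exponent.
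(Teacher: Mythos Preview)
Your sketch has the right ingredients list --- Szemer\'edi--Trotter, the third additive energy $\T_3^+(A)=\E_3^+(A)$, and the small-quotient hypothesis --- but the roles are inverted and the key step is missing.

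First, the bound you call ``the new ingredient, and the main obstacle'', namely $\E_3^+(A)\ll M^{O(1)}|A|^{3+o(1)}$, is not new: it is exactly Lemma~\ref{l:E_3_M} (with $M(A)\le M^2$ when $|A/A|\le M|A|$), and indeed the paper uses it as a black box. So this is not where the work lies.

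Second, the inequality you assert ``schematically'', $|A-A|^3\cdot \E_3^+(A)\gg M^{-O(1)}|A|^{8-o(1)}$, is the real content, and your proposed route to it does not work. The only generic H\"older relation available is $(\E^+(A))^2\le |A|^2\,\E_3^+(A)$ together with $\E^+(A)\ge |A|^4/|A-A|$, which gives merely $|A-A|^2\,\E_3^+(A)\ge |A|^6$ and hence only $|A-A|\gg |A|^{3/2}$ again. Your ``triples of consecutive slopes'' idea does not manufacture the missing factor: three points on three lines through the origin being collinear is a single scalar constraint and does not encode a triple of coinciding additive differences $a_1-a_1'=a_2-a_2'=a_3-a_3'$; there is no mechanism here that produces $\E_3^+$ on the lower-bound side.

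What the paper actually does is different in structure. One takes points $\mathcal P=(A-A)\times(A-A)$ and, for each $\lambda$ in a popular level set $\Lambda\subseteq A/A$, the family $\mathcal L_\lambda$ of lines of slope $\lambda$ meeting the set $Q_\lambda:=A\times A_\lambda-\Delta_\lambda(A_\lambda)$. The crucial observation is that the projection $\pi_\lambda(x,y)=x-\lambda y$ sends $Q_\lambda$ into $A-A$, so $|\mathcal L_\lambda|\le |A-A|$; Szemer\'edi--Trotter then gives $\sum_{\lambda\in\Lambda}|Q_\lambda|\ll |A-A|^2|\Lambda|^{2/3}$. The lower bound for $\sum_\lambda|Q_\lambda|$ comes from two Cauchy--Schwarz steps, yielding $|A|^2\big(\sum_\lambda|A_\lambda|^2\big)^2\le \big(\sum_\lambda|Q_\lambda|\big)\cdot\sigma_2$ with $\sigma_2=\sum_\lambda\sum_w (A_\lambda\circ A_\lambda)^2(w)(A\circ A)(\lambda w)$. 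It is in \emph{upper}-bounding $\sigma_2$, via H\"older and Lemma~\ref{l:E_3_M} applied to $A$ and to each $\tilde A_\lambda=A\cap\lambda A$, that $\E_3^+$ enters --- on the opposite side of the inequality from where you placed it. The exponent $5/3$ then arises from $|A-A|^2$ against the Szemer\'edi--Trotter gain $|\Lambda|^{2/3}$ with $|\Lambda|\ll M|A|$, not from any $|A-A|^3$.

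In short: the $\E_3^+$ upper bound is off the shelf; the genuine idea is the $Q_\lambda$/projection incidence setup that forces each slope class to contribute at most $|A-A|$ lines, and the Cauchy--Schwarz decomposition that makes $\E_3^+$ appear as an \emph{upper} bound on the error term $\sigma_2$.
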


Our method uses some ideas from the higher energies, see \cite{ss} and has some intersections with \cite{soly_14/11}.
The main new ingredient is
the following
observation.
Let us
suppose
that there is a family of finite (multidimensional) sets $A_j$, $j=1,\dots,n$ and we want to obtain a lower bound for $\bigcup_{j=1}^n A_j$ better than $\max_j |A_j|$.
Let us assume the contrary and the first simple model situation is $A_1=\dots=A_n$,
so we need to separate from the
case at least.
Suppose that for any $j$ there is a map (projection) $\pi_j$ associated with each set $A_j$.
We should think about the maps $\pi_j$ as about  "different"\, maps somehow (in particular they cannot coincide).
More precisely, if one is able to prove that $\bigcup_{j=1}^n \pi_i (A_j)$ is strictly bigger than $\max_{j} |\pi_i (A_j)|$
then it cannot be the case $A_1=\dots=A_n$ and hence $\bigcup_{j=1}^n A_j$ should be large.
For more precise formulation see the proof of Theorem \ref{t:5/3}.

\bigskip

Our second main result shows that Conjecture (\ref{conj:ES_weak})
holds
if one considers
$A+A+A$ or $A+A-A$, see Theorem \ref{t:E_M_a} below.

\begin{theorem}
    Let $A \subset \R$ be a finite set, and $|AA| \ll |A|$ or $|A/A| \ll |A|$.
    Then for any $\a,\beta \neq 0$ one has
    $$
        |A+\a A + \beta A| \gg \frac{|A|^2}{\log^{} |A|} \,.
    $$
\label{t:2_ab_intr}
\end{theorem}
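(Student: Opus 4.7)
The plan is to reduce to bounding a sixth-order additive energy, and then to estimate this energy using the multiplicative structure. By the multiplicative Plünnecke--Ruzsa inequality the two cases $|AA|\ll|A|$ and $|A/A|\ll|A|$ are equivalent up to constants, so I write $K=O(1)$ for this common bound. A direct Cauchy--Schwarz applied to the representation function $r(s)=|\{(a,b,c)\in A^{3}:s=a+\alpha b+\beta c\}|$ gives
$$
|A+\alpha A+\beta A|\ge \frac{|A|^{6}}{\sigma},\qquad \sigma:=\sum_{s}r(s)^{2},
$$
so everything reduces to the estimate $\sigma\ll|A|^{4}\log|A|$.

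Setting $x=a-a'$, $y=b-b'$, $z=c-c'\in A-A$, the quantity $\sigma$ becomes a weighted count of triples on the plane $x+\alpha y+\beta z=0$:
$$
\sigma=\sum_{\substack{(x,y,z)\in (A-A)^{3}\\ x+\alpha y+\beta z=0}} r_{A-A}(x)\,r_{A-A}(y)\,r_{A-A}(z).
$$
I would dyadically split $A-A$ according to the magnitude of $r_{A-A}$, losing only a factor $\log^{O(1)}|A|$, and on each dyadic level reinterpret the result as an incidence count between a product $\mathcal{P}_\tau\times\mathcal{P}_{\tau'}\subset (A-A)^{2}$ and a family of parallel lines $x+\alpha y=-\beta z$, indexed by $z\in\mathcal{P}_{\tau''}$. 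The parallelism means that Szemerédi--Trotter gives nothing directly; the gain must come entirely from the multiplicative hypothesis.

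To exploit $|A/A|\ll|A|$ I would invoke Solymosi's parallelogram argument: sort the grid $A\times A$ by the ratio in $A/A$ into $O(K|A|)$ rays through the origin, and control the sum $\sum_{i}|L_{\sigma_i}||L_{\sigma_{i+1}}|\ll|A+A|^{2}$ along consecutive slopes. The hardest part is that the coefficients $\alpha,\beta$ play no role in the multiplicative hypothesis, so this parallelogram input must be fed into the three-variable linear relation uniformly in $\alpha,\beta$; I expect this is accomplished by a Cauchy--Schwarz insertion that turns the triple sum into a bilinear sum over pairs of rays of $A\times A$, to which the Solymosi bound applies, combined with the Plünnecke--Ruzsa estimates $|A^{k}A^{-l}|\ll|A|$ to control the various dilates of $A$ that appear. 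Summing over the $O(\log|A|)$ dyadic levels should yield $\sigma\ll|A|^{4}\log|A|$ and hence the theorem.
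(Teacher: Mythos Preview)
Your reduction to the sixth–moment energy $\sigma=\sum_s r(s)^2$ is correct, and the target $\sigma\ll|A|^4\log|A|$ would indeed give the theorem. The gap is everything after that. The Solymosi parallelogram inequality you quote, $\sum_i |L_{\sigma_i}||L_{\sigma_{i+1}}|\le |A+A|^2$, goes in the wrong direction: it is a \emph{lower} bound on a sumset (equivalently an \emph{upper} bound on $\E^\times(A)$ in terms of $|A+A|$), whereas you need an \emph{upper} bound on an additive energy from the multiplicative hypothesis. There is no control on $|A+A|$ here --- it may be as large as $|A|^2$ --- so the Solymosi bound as stated carries no information for you. The phrases ``I expect this is accomplished by a Cauchy--Schwarz insertion'' and ``combined with the Pl\"unnecke--Ruzsa estimates'' do not constitute an argument; you have not specified what bilinear form arises, why it is controlled by ray sizes, or how multiplicative doubling enters. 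As a sanity check, the obvious route through $\E_3^+(A)\ll|A|^3\log|A|$ (which \emph{does} follow from $|A/A|\ll|A|$ via Szemer\'edi--Trotter) together with Young's inequality only yields $\sigma\ll|A|^{9/2}\log^{1/2}|A|$, hence $|A+\alpha A+\beta A|\gg|A|^{3/2}$, which is far from the claim.

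The paper's proof is structurally different and does not pass through $\sigma$ at all. The key identity is
\[
(a+\alpha)(b+\beta)=ab+\beta a+\alpha b+\alpha\beta,
\]
so that whenever $b\in A_a=A\cap a^{-1}A$ one has $ab\in A$ and the product lands in $A+\alpha A+\beta A+\alpha\beta$. One first arranges (Lemma~\ref{l:sigma&E}) that $\sum_{a\in A}|A_a|\gg|A|^2$, giving $\gg|A|^2$ such products with multiplicity. The collision count $\sum_x n(x)^2$ is then dominated by a multiplicative energy $\E^\times(Q+\alpha/a,\,Q+\beta/a')$ for some $a,a'\in A$, and this is bounded by $\ll|A|^2\log|A|$ via Roche--Newton's collinear triples estimate (Theorem~\ref{t:triples_R} / Corollary~\ref{c:R-N_T}). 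Cauchy--Schwarz on $n(x)$ finishes. The multiplicative hypothesis enters not through Solymosi but through the containment $A\subseteq Qa$ and the collinear–triples bound, which is the ingredient your sketch is missing.
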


Theorem \ref{t:2_ab_intr} is an analog of main Theorem  1 from \cite{Sh_3G} and it is proved by a similar method.
Also we study
different
properties of sets with small product/quotient set, see section \ref{sec:further}.

The best results for
multiple
sumsets $kA$, $k\to \infty$ of sets $A$ with small product/quotient set can be found in \cite{BC}, see also
our
remarks in section \ref{sec:further}.

The author is grateful to S.V. Konyagin, O. Roche--Newton
and M. Rudnev
for useful discussions and remarks.

\section{Notation}
\label{sec:definitions}


Let $\Gr$ be an abelian group.
In this paper we use the same letter to denote a set $S\subseteq \Gr$
and its characteristic function $S:\Gr \rightarrow \{0,1\}.$
By $|S|$ denote the cardinality of $S$.

Let $f,g : \Gr \to \C$ be two functions.
Put
\begin{equation}\label{f:convolutions}
    (f*g) (x) := \sum_{y\in \Gr} f(y) g(x-y) \quad \mbox{ and } \quad
        (f\circ g) (x) := \sum_{y\in \Gr} f(y) g(y+x) \,.
\end{equation}
By
$\E^{+}(A,B)$ denote  the {\it additive energy} of two sets $A,B \subseteq \Gr$
(see e.g. \cite{TV}), that is
$$
    \E^{+} (A,B) = |\{ a_1+b_1 = a_2+b_2 ~:~ a_1,a_2 \in A,\, b_1,b_2 \in B \}| \,.
$$
If $A=B$ we simply write $\E^{+} (A)$ instead of $\E^{+} (A,A).$
Clearly,
\begin{equation*}\label{f:energy_convolution}
    \E^{+} (A,B) = \sum_x (A*B) (x)^2 = \sum_x (A \circ B) (x)^2 = \sum_x (A \circ A) (x) (B \circ B) (x)
    \,.
\end{equation*}
Note also that
\begin{equation}\label{f:E_CS}
    \E^{+} (A,B) \le \min \{ |A|^2 |B|, |B|^2 |A|, |A|^{3/2} |B|^{3/2} \} \,.
\end{equation}
More generally (see \cite{ss}), for $k\ge 2$ put
$$
    \E^{+}_k (A) = |\{ a_1-a'_1 = a_2 - a'_2 = \dots = a_k - a'_k ~:~ a_i, a'_i \in A \}|
    \,.
$$
Thus, $\E^{+} (A) = \E^{+}_2 (A)$.

In the same way define the {\it multiplicative energy} of two sets $A,B \subseteq \Gr$
$$
    \E^{\times} (A,B) = |\{ a_1 b_1 = a_2 b_2 ~:~ a_1,a_2 \in A,\, b_1,b_2 \in B \}|
$$
and, similarly, $\E^\times_k (A)$.
Certainly, the multiplicative energy $\E^{\times} (A,B)$ can be expressed in terms of multiplicative convolutions,
as in
(\ref{f:convolutions}).
We often use the notation
$$
    A_\la = A^\times_\la = A\cap (\la^{-1} A)
$$
for any $\la \in A/A$.
Hence
$$
    \E^\times (A) = \sum_{\la \in A/A} |A_\la|^2 \,.
$$

For given integer $k\ge 2$, a fixed vector $\v{\la} = (\la_1,\dots, \la_{k-1})$  and a set $A$ put
$$
    \D_{\v{\la}} (A) = \{ (\la_1 a,\la_2 a,\dots,\la_{k-1} a, a) ~:~ a\in A \} \subseteq A^k \,.
$$


All logarithms are base $2.$ Signs $\ll$ and $\gg$ are the usual Vinogradov's symbols.
Having a set
$A$,
we write
$a \lesssim b$ or $b \gtrsim a$ if $a = O(b \cdot \log^c |A|)$, $c>0$.
For any given prime $p$ denote by $\F_p$ the finite prime field and put $\F^*_p = \F_p \setminus \{ 0 \}$.

\section{Preliminaries}
\label{sec:preliminaries}

Again, let $\Gr = (\Gr, +)$ be an abelian group with the group operation $+$.
We
begin with the famous Pl\"{u}nnecke--Ruzsa inequality (see \cite{TV}, e.g.).

\begin{lemma}
Let $A,B\subseteq \Gr$ be two finite sets, $|A+B| \le K|A|$.
Then for all positive integers $n,m$ the following holds
\begin{equation}\label{f:Plunnecke}
    |nB-mB| \le K^{n+m} |A| \,.
\end{equation}
Further, for any $0< \d < 1$ there is $X \subseteq A$ such that $|X| \ge (1-\d) |A|$ and for any integer $k$ one has
\begin{equation}\label{f:Plunnecke_X}
    |X+kB| \le (K/\d)^k |X| \,.
\end{equation}
\label{l:Plunnecke}
\end{lemma}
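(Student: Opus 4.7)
The plan is to follow Petridis's modern proof. First, pick a non-empty $X^* \subseteq A$ minimizing the ratio $K' := |X^*+B|/|X^*|$; by hypothesis $K' \le K$. The key step is the \emph{magnification lemma}: for every finite $C \subseteq \Gr$,
$$|X^* + B + C| \le K' \cdot |X^* + C|.$$
I would prove this by induction on $|C|$. The base case $|C|=1$ is the definition of $K'$. For the inductive step, write $C = C' \cup \{c\}$ and consider the subset $Y \subseteq X^*$ of those $x$ for which $x + B + c$ already lies inside $X^* + B + C'$; the minimality of $K'$ forces $|Y + B| \ge K' |Y|$, which after the appropriate bookkeeping closes the induction.

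Applying the magnification lemma iteratively with $C = B, 2B, \ldots, (k-1)B$ yields $|X^* + kB| \le (K')^k |X^*|$ for all $k \ge 1$. To derive (\ref{f:Plunnecke}) I would combine this with Ruzsa's triangle inequality $|W| \cdot |Y-Z| \le |W+Y| \cdot |W+Z|$, applied with $W = X^*$, $Y = nB$, $Z = mB$:
$$|nB - mB| \le \frac{|X^* + nB| \cdot |X^* + mB|}{|X^*|} \le (K')^{n+m} |X^*| \le K^{n+m}|A|.$$

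For (\ref{f:Plunnecke_X}), where the subset $X$ must have density at least $1-\delta$ and not merely be nonempty, I would iterate the extraction: start with $X^{(1)} := X^*$ obtained from $A$, then apply the same construction to $A \setminus X^{(1)}$ (which still has $|(A \setminus X^{(1)}) + B| \le K|A|$, so the new ratio is at most $K|A| / |A \setminus X^{(1)}|$), and repeat. Stopping once the leftover drops below $\delta |A|$ guarantees that each extracted piece has ratio at most $K/\delta$; taking $X$ to be the union of all pieces gives $|X| \ge (1-\delta)|A|$, and subadditivity of sumsets yields $|X + kB| \le \sum_i |X^{(i)} + kB| \le (K/\delta)^k |X|$ uniformly in $k$.

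The main obstacle is the delicate induction inside the magnification lemma: verifying that the minimality of $K'$ produces precisely the overlap estimate $|Y + B| \ge K'|Y|$ needed to close the induction—this is the one place where the extremality of $X^*$ is genuinely used, and the rest of the argument is bookkeeping plus a standard invocation of the Ruzsa triangle inequality.
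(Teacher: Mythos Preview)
The paper does not prove this lemma at all; it merely quotes it as the ``famous Pl\"unnecke--Ruzsa inequality'' with a reference to Tao--Vu \cite{TV}. Your argument via Petridis's magnification lemma is the standard modern proof and is correct: the induction on $|C|$ works exactly as you describe, the Ruzsa triangle inequality gives \eqref{f:Plunnecke}, and for \eqref{f:Plunnecke_X} the greedy extraction is sound because at each stage the residual set $A_{i-1}$ still satisfies $|A_{i-1}+B|\le K|A|$ while $|A_{i-1}|\ge \delta|A|$, so the minimizing subset $X^{(i)}\subseteq A_{i-1}$ has ratio $K_i\le K/\delta$; disjointness of the $X^{(i)}$ then turns $\sum_i (K/\delta)^k |X^{(i)}|$ into $(K/\delta)^k|X|$.
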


We need a simple lemma.

\begin{lemma}
    Let
    $A\subset \R$ be a finite set.
    Then there is $z$ such that
    $$
        \sum_{x\in zA} |zA \cap x (zA)| \gg \frac{\E^\times (A)}{|A|} \,.
    $$
\label{l:sigma&E}
\end{lemma}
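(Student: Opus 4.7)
My plan is a short multiplicative identity followed by pigeonhole over a convenient averaging set of values of $z$.

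First I would rewrite the inner quantity $|zA \cap x(zA)|$. For nonzero $x$ and $z$, an element $y$ lies in $zA \cap x(zA)$ precisely when $y = za = xza'$ for some $a, a' \in A$, i.e.\ when $a' \in A$ and $a = xa' \in A$. This parametrization by $a'$ is bijective (for fixed $x,z$ the map $a' \mapsto y = zxa'$ is injective), so
$$|zA \cap x(zA)| = |\{a' \in A : xa' \in A\}| = |A_x|,$$
and the left-hand side of the desired inequality becomes $\sum_{x \in zA} |A_x|$.

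Next I would average over $z \in A^{-1}$. Swapping the order of summation, for each $x$ I need $|\{z \in A^{-1} : x \in zA\}| = |A^{-1} \cap xA^{-1}|$; writing $z = 1/a$ with $a \in A$ this reduces to $|\{a \in A : xa \in A\}| = |A_x|$. Therefore
$$\sum_{z \in A^{-1}} \sum_{x \in zA} |A_x| = \sum_x |A_x|^2 = \E^\times(A).$$
Since $|A^{-1}| = |A|$, pigeonhole produces some $z \in A^{-1}$ with $\sum_{x \in zA} |A_x| \geq \E^\times(A)/|A|$, which is the claimed bound.

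The argument has no genuine obstacle; the only point that must be gotten right is the choice of averaging set. Taking $z \in A^{-1}$ (rather than, say, $z \in A$) is what makes the double sum collapse symmetrically, because both the ``number of $z \in A^{-1}$ with $x \in zA$'' factor and the original summand end up equal to $|A_x|$, so the double sum becomes exactly the multiplicative energy. One should also tacitly assume $0 \notin A$ (or delete it at negligible cost) so that the bijections in each step remain valid.
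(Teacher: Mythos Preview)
Your proof is correct and considerably shorter than the paper's. The key identity $|zA \cap x(zA)| = |A \cap xA| = |A_x|$ (valid for $z \neq 0$) reduces the problem to finding $z$ with $\sum_{x \in zA} |A_x|$ large, and your choice to average over $z \in A^{-1}$ makes the double sum collapse exactly to $\sum_x |A_x|^2 = \E^\times(A)$; pigeonhole then gives the bound with implied constant $1$.

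The paper takes a different, more laborious route. It first isolates the ``popular'' set $P = \{x : |A \cap xA| > \Delta\}$ with $\Delta = \E^\times(A)/(2|A|^2)$, then passes to a rich subset $A' \subseteq A$ on which $|P \cap x^{-1}A|$ is large, and finally pigeonholes to locate a single $x \in A$ such that $W = x(P \cap x^{-1}A') \subseteq A$ is large; the conclusion follows with $z = x^{-1}$. This argument loses a constant and is noticeably longer. What it does buy is a bit of extra structure: the large sum is witnessed entirely on $x$'s lying in the popular set $P$ (so each term $|A_x|$ is individually above threshold), whereas your averaging argument gives no such control on individual summands. Since that refinement is not used anywhere else in the paper, your argument is a genuine simplification.
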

\begin{proof}
    Without loss  of generality one can suppose that $0\notin A$.
    We have
    $$
        \E^\times (A) = \sum_x |A\cap xA|^2 \le 2 \sum_{x ~:~ |A\cap xA| > \E^\times (A)/ (2|A|^2)} |A\cap xA|^2 \,.
    $$
    Thus, putting $\D = \E^\times (A)/ (2|A|^2)$ and
    $P$ equals
    $$
        P = \{ x ~:~ \D < |A \cap xA| \} \,,
    $$
    we get $|P| \D^2 \gg \E^\times (A)$.
    Let $A' = \{ x\in A ~:~ |P \cap x^{-1} A| \ge 2^{-1} \D |P| \}$.
    Because of $P = P^{-1}$,
     we have
$$
    \D |P| < \sum_{x\in P} |A\cap x A| = \sum_{x\in A} |P \cap x A^{-1}| = \sum_{x\in A} |P \cap x^{-1} A|
    \le 2\sum_{x \in A'} |P \cap x^{-1} A| \,.
$$
In other words,
$$
    \D |P| \ll \sum_{x\in A'} |P \cap x^{-1} A| = \sum_{x\in A} |P \cap x^{-1} A'| \,.
$$
It follows that there is $x\in A$ with $|P \cap x^{-1} A'| \gg \D |P|/ |A|$.
Put $W = x (P \cap x^{-1} A') \subseteq A' \subseteq A$ and note that
$$
    \E^\times (A) |A|^{-1} \ll \D^2 |P| |A|^{-1} \ll |W| \D < \sum_{y\in x^{-1} W} |A\cap y A|
        \le \sum_{y\in x^{-1} A} |x^{-1} A\cap y (x^{-1} A)|
$$
as required.
$\hfill\Box$
\end{proof}




\bigskip

The method of the paper
relies on the famous Szemer\'edi--Trotter Theorem
\cite{sz-t}, see also \cite{TV}.
Let us recall the definitions.

We call a set $\mathcal{L}$ of continuous
plane curves a {\it pseudo-line system} if any two members of $\mathcal{L}$
are determined by two points.
Define the {\it number of indices} $\mathcal{I} (\mathcal{P},\mathcal{L})$ between points and pseudo--lines  as
$\mathcal{I}(\mathcal{P},\mathcal{L})=|\{(p,l)\in \mathcal{P}\times \mathcal{L} : p\in l\}|$.

\begin{theorem}\label{t:SzT}
Let $\mathcal{P}$ be a set of points and let $\mathcal{L}$ be a pseudo-line system.
Then
$$\mathcal{I}(\mathcal{P},\mathcal{L}) \ll |\mathcal{P}|^{2/3}|\mathcal{L}|^{2/3}+|\mathcal{P}|+|\mathcal{L}|\,.$$
\end{theorem}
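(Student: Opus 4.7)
The plan is to apply the classical crossing--lemma argument of Sz\'ekely, which extends to pseudo-lines because the only geometric property it exploits is that any two members of $\mathcal{L}$ meet in at most one point. Write $P=|\mathcal{P}|$, $L=|\mathcal{L}|$ and $I=\mathcal{I}(\mathcal{P},\mathcal{L})$. First I would discard the pseudo-lines that contain at most one point of $\mathcal{P}$, losing at most $L$ incidences. For each remaining pseudo-line $\ell$ carrying $n_\ell \geq 2$ points, I would go through those points in the natural order along $\ell$ and connect consecutive ones by the corresponding arc of $\ell$. This produces a topological graph $G$ with vertex set $\mathcal{P}$ and $|E(G)| = \sum_\ell (n_\ell - 1) \geq I - L$ edges.

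Since two distinct pseudo-lines cross in at most one point, the total number of crossings in this drawing is at most $\binom{L}{2}$. Now I would invoke the crossing lemma of Ajtai--Chv\'atal--Newborn--Szemer\'edi: every topological graph on $V$ vertices with $|E| \geq 4V$ edges admits at least $|E|^3/(64\, V^2)$ crossings. If $|E(G)| < 4P$, then already $I \leq 4P + L$ and we are done. Otherwise $(I-L)^3/(64\, P^2) \leq \binom{L}{2} \ll L^2$, hence $I - L \ll P^{2/3} L^{2/3}$. Combining the two regimes yields the claimed bound $\mathcal{I}(\mathcal{P},\mathcal{L}) \ll P^{2/3} L^{2/3} + P + L$.

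The main technical point is to check that the drawing of $G$ is \emph{simple}, i.e.\ no pair of edges shares more than one point, so that the crossing lemma really applies. Arcs coming from the same pseudo-line meet only at common endpoints (they do not overlap on their interiors), and arcs coming from distinct pseudo-lines meet in at most one point by the pseudo-line hypothesis, so the drawing is indeed simple. With this verified, the only remaining ingredient, the crossing lemma itself, follows from Euler's formula together with a short probabilistic amplification, and is independent of the present setting.
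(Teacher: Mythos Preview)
Your argument is correct: this is precisely Sz\'ekely's crossing-number proof, and it goes through for pseudo-lines because the only geometric input is that two curves in $\mathcal{L}$ share at most one point. Your bookkeeping is sound (in particular $|E(G)|\ge I-L$ and the absence of multi-edges both follow from the two-point determination property), and the simplicity check you supply is exactly what is needed for the crossing lemma.

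As for the comparison: the paper does \emph{not} prove Theorem~\ref{t:SzT} at all. It is quoted in the Preliminaries as a black box, with references to the original paper \cite{sz-t} and to \cite{TV}, and is then applied in the proof of Theorem~\ref{t:5/3}. So there is no ``paper's own proof'' to compare against; you have supplied a complete (and standard) proof where the paper simply cites the literature.
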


\bigskip

A simple consequence of Theorem \ref{t:SzT} was obtained in \cite{s_sumsets}, see Lemma 7.

\begin{lemma}
    Let $A\subset \R$ be a finite set.
    Put $M (A)$ equals
    \begin{equation}\label{fd:E_3_M}
        M (A) := \min_{B \neq \emptyset} \frac{|AB|^2}{|A| |B|} \,.
    \end{equation}
    Then
\begin{equation}\label{f:E_3_M}
    \E^{+}_3 (A) \ll M(A) |A|^3 \log |A| \,.
\end{equation}
\label{l:E_3_M}
\end{lemma}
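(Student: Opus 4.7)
The plan is to reduce the bound to a single-scale estimate via dyadic pigeonholing, and then to prove that estimate with an Elekes--type application of the Szemer\'edi--Trotter theorem. For each dyadic scale let
$$
P_\Delta = \{\, x ~:~ \Delta < (A\circ A)(x) \le 2\Delta \,\} \,,
$$
so that $\E^{+}_3(A) \ll \sum_{\Delta} \Delta^3 |P_\Delta|$ with only $O(\log |A|)$ nonzero summands (because $(A\circ A)(x) \le |A|$). It therefore suffices to establish
$$
\Delta^3 |P_\Delta| \ll M(A)\, |A|^3
$$
uniformly in $\Delta$, after which (\ref{f:E_3_M}) follows by summing over dyadic scales.

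Fix a set $B$ attaining the minimum in the definition of $M(A)$ (discarding $0$ from $A$ and $B$ at negligible cost, so that division by elements of $B$ is legitimate). The incidence configuration I will use is
$$
\mathcal{P} = AB \times P_\Delta, \qquad \mathcal{L} = \bigl\{\, \ell_{a,b} ~:~ y = b^{-1} u - a,\; a \in A,\; b \in B\,\bigr\}\,,
$$
so $|\mathcal{P}| = |AB||P_\Delta|$, and since the slope $b^{-1}$ identifies $b$ and the intercept $-a$ then identifies $a$, also $|\mathcal{L}| = |A||B|$. For every $x \in P_\Delta$, every $a \in A$ with $a+x \in A$, and every $b \in B$, the point $\bigl((a+x)b,\, x\bigr) \in \mathcal{P}$ lies on $\ell_{a,b}$; the second coordinate recovers $x$ while the line recovers $(a,b)$, so these incidences are pairwise distinct, yielding
$$
\mathcal{I}(\mathcal{P},\mathcal{L}) \ge \Delta\, |P_\Delta|\, |B| \,.
$$
Inserting this into Theorem~\ref{t:SzT} and retaining the main Szemer\'edi--Trotter term gives $\Delta |P_\Delta||B| \ll (|AB||P_\Delta|)^{2/3}(|A||B|)^{2/3}$, which rearranges to exactly $\Delta^3|P_\Delta| \ll |AB|^2 |A|^2/|B| = M(A)|A|^3$.

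The two linear error terms in Szemer\'edi--Trotter must also be ruled out. If $|\mathcal{P}|$ dominates, then $\Delta|B| \ll |AB|$, and combining this with the trivial estimate $|P_\Delta| \le |A|^2/\Delta$ yields $\Delta^3|P_\Delta| \le \Delta^2|A|^2 \ll (|AB|/|B|)^2|A|^2 = M(A)|A|^3/|B|$; if instead $|\mathcal{L}|$ dominates, then $\Delta|P_\Delta| \ll |A|$, and together with $\Delta \le |A|$ and the easy inequality $M(A) \ge 1$ (which follows from $|AB| \ge \max(|A|,|B|)$) the same bound follows. The step that requires the most care is the choice of incidence geometry: the parametrization $(u,v) = ((a+x)b,\,x)$ is tailored so that the second coordinate losslessly encodes $x$ and the line losslessly encodes $(a,b)$, which is precisely what makes the Szemer\'edi--Trotter main term collapse into the quantity $|AB|^2/|B|$ demanded by $M(A)$. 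Less carefully chosen configurations (for instance, lines through the origin parametrized by slope $a'/a$) would introduce extra multiplicative-energy factors and would miss this clean bound.
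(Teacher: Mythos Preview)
Your proof is correct. The paper does not give its own proof of this lemma but cites it from \cite{s_sumsets}; your dyadic pigeonholing followed by an Elekes--type Szemer\'edi--Trotter incidence count (points $AB\times P_\Delta$, lines $y=b^{-1}u-a$) is exactly the standard argument one finds there, and your handling of the two error terms is clean.
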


Also
we need a result from \cite{R_Minkovski}.
Let $\T (A)$ be the number of {\it collinear triples} in $A\times A$.

\begin{theorem}
     Let $A\subset \R$ be a finite set.
     Then
$$
    \T(A) \ll |A|^4 \log |A| \,.
$$
\label{t:triples_R}
\end{theorem}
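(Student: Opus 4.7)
The plan is to reduce the count of collinear triples to a sum over lines of the third moment of the incidence function on $A \times A$, and then bound this moment by a dyadic application of the Szemer\'edi--Trotter Theorem (Theorem \ref{t:SzT}).

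Set $\mathcal{P} = A \times A$, so $|\mathcal{P}| = |A|^2$. For each line $\ell$ in the plane write $i(\ell) = |\ell \cap \mathcal{P}|$. Counting ordered collinear triples by the line they lie on gives
$$
    \T(A) \ll \sum_{\ell ~:~ i(\ell) \ge 2} i(\ell)^3 \,,
$$
so the task is to estimate this sum. The key geometric observation is that \emph{every} line contains at most $|A|$ points of $\mathcal{P}$: a non-vertical line is a graph of a function so meets $\mathcal{P}$ in at most $|A|$ points (one per $x$-coordinate), and a vertical line contains exactly $|A|$ or $0$. Hence $i(\ell) \le |A|$ and only $O(\log |A|)$ dyadic scales contribute.

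Next I would apply Theorem \ref{t:SzT} in its standard corollary form: for any $k \ge 2$, the number of lines with $i(\ell) \ge k$ is
$$
    \ll \frac{|\mathcal{P}|^2}{k^3} + \frac{|\mathcal{P}|}{k} = \frac{|A|^4}{k^3} + \frac{|A|^2}{k} \,.
$$
Splitting the sum over $\ell$ into dyadic ranges $i(\ell) \in [2^j, 2^{j+1})$ for $j = 1, \dots, J$ with $2^J \le |A|$ gives
$$
    \sum_{\ell} i(\ell)^3 \ll \sum_{j=1}^{J} 2^{3j} \left( \frac{|A|^4}{2^{3j}} + \frac{|A|^2}{2^j} \right)
        = \sum_{j=1}^{J} |A|^4 + \sum_{j=1}^{J} 2^{2j} |A|^2 \ll J |A|^4 + 2^{2J} |A|^2 \,.
$$
Since $J \le \log |A|$ and $2^J \le |A|$, both terms are absorbed into $|A|^4 \log |A|$, proving the bound $\T(A) \ll |A|^4 \log |A|$.

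I do not anticipate a serious obstacle here: the only subtlety is the uniform bound $i(\ell) \le |A|$, which is what prevents the $|\mathcal{P}|/k$ term in Szemer\'edi--Trotter from blowing up and what caps the number of dyadic scales at $\log |A|$, thereby producing the single logarithmic factor. Everything else is a textbook dyadic incidence calculation.
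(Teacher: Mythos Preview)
Your argument is correct and is the standard proof of this bound. Note, however, that the paper itself does not give a proof of Theorem~\ref{t:triples_R}: it is quoted as a result from \cite{R_Minkovski}. The approach there is the same as yours --- reduce to $\sum_\ell i(\ell)^3$ over lines, apply Szemer\'edi--Trotter dyadically, and use the fact that any line meets the Cartesian product $A\times A$ in at most $|A|$ points to cap the dyadic range at $O(\log |A|)$ scales.
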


More generally, for three finite sets $A,B,C \subset \R$ put $\T(A,B,C)$ be the number of collinear triples in
$A\times A$, $B\times B$, $C\times C$, correspondingly.
Clearly, the quantity $\T(A,B,C)$ is symmetric on all its variables.
Further, it is easy to see that
$$
    \T(A,B,C) = \left| \left\{ \frac{c_1-a_1}{b_1-a_1} = \frac{c_2-a_2}{b_2-a_2}
        ~:~ a_1,a_2\in A,\, b_1,b_2 \in B,\, c_1,c_2 \in C \right\} \right|
            +
            2 |A\cap B \cap C| |A|||B||C| \,,
$$
and
\begin{equation}\label{f:T_energy}
    \T(A,B,B) = \sum_{a_1,a_2 \in A} \E^\times (B-a_1, B-a_2) \,.
\end{equation}

\begin{corollary}
    Let $A,B \subset \R$ be two finite sets, $|B| \le |A|$.
    Then
$$
    \T(A,B,B) \ll |A|^2 |B|^2 \log |B| \,,
$$
    and for any finite $A_1,A_2 \subset \R$, $|B| \le |A_1|, |A_2|$ one has
$$
    \T(A_1,A_2,B) \ll |A_1|^2 |A_2|^2 \log |B|\,.
$$
\label{c:R-N_T}
\end{corollary}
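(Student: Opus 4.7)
My plan is to interpret both quantities as weighted sums over pseudolines and apply the Szemer\'edi--Trotter theorem with dyadic decomposition, following the same template as the proof of Theorem \ref{t:triples_R}. Setting $a_\ell = |(A\times A)\cap\ell|$, $b_\ell = |(B\times B)\cap\ell|$, and $a^{(i)}_\ell = |(A_i\times A_i)\cap\ell|$, we have, up to harmless diagonal terms (of order $|A\cap B|\cdot|A||B|^2$ and $|A_1\cap A_2\cap B|\cdot|A_1||A_2||B|$, which under the hypotheses are swallowed by the target bounds),
$$
\T(A, B, B) = \sum_\ell a_\ell b_\ell^2, \qquad \T(A_1, A_2, B) = \sum_\ell a^{(1)}_\ell a^{(2)}_\ell b_\ell.
$$

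For the first bound I would dyadically partition over $b_\ell$: set $L_i = \{\ell : b_\ell \ge 2^i\}$, and by Theorem \ref{t:SzT} applied to $B\times B$, $|L_i| \ll |B|^4/2^{3i} + |B|^2/2^i$. Since $b_\ell \le |B|$, the sum runs over $0 \le i \le \log|B|$, and
$$
\T(A, B, B) \ll \sum_{i \le \log|B|} 2^{2i}\, \mathcal{I}(A\times A, L_i).
$$
A second application of Theorem \ref{t:SzT}, now to the points $A\times A$ and the lines $L_i$, gives $\mathcal{I}(A\times A, L_i) \ll |A|^{4/3}|L_i|^{2/3} + |A|^2 + |L_i|$, splitting the sum into three contributions of orders $|A|^{4/3}|B|^{8/3}\log|B|$, $|A|^2|B|^2$, and $|B|^4$. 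Each of these is $\ll |A|^2|B|^2\log|B|$ under the hypothesis $|B|\le|A|$, yielding the first estimate.

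For the second bound I would apply Cauchy--Schwarz in $\ell$ with weight $b_\ell^{1/2}$,
$$
\T(A_1, A_2, B)^2 \le \left(\sum_\ell (a^{(1)}_\ell)^2 b_\ell\right)\left(\sum_\ell (a^{(2)}_\ell)^2 b_\ell\right) = \T(A_1, A_1, B)\,\T(A_2, A_2, B),
$$
reducing matters to the auxiliary estimate $\T(A, A, B) \ll |A|^4 \log|B|$ whenever $|B| \le |A|$. To prove this I would again dyadically decompose $\sum_\ell a_\ell^2 b_\ell$ on $b_\ell$, and inside each level $L_i$ perform a nested dyadic partition on $a_\ell$, using $|L^a_j| := |\{\ell:a_\ell \ge 2^j\}| \ll |A|^4/2^{3j} + |A|^2/2^j$. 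A two-parameter crossover calculation for $\sum_j 2^{2j}\min(|L_i|,|L^a_j|)$ yields $\sum_{\ell \in L_i} a_\ell^2 \ll |A|^{8/3}|L_i|^{1/3}$, and summing over $i \le \log|B|$ gives $\T(A,A,B) \ll |A|^{8/3}|B|^{4/3}\log|B|$, which is $\ll |A|^4 \log|B|$ since $|B|\le|A|$. Substituting back into Cauchy--Schwarz produces $\T(A_1, A_2, B) \ll |A_1|^2|A_2|^2 \log|B|$.

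The main technical step is this two-parameter balancing in the auxiliary bound: one has to correctly identify the crossover between the two Szemer\'edi--Trotter regimes $|A|^4/2^{3j}$ and $|B|^4/2^{3i}$ and verify that the scaling $|A|^{8/3}|L_i|^{1/3}$ comes out on both sides of the crossover. Everything else is a direct computation on top of the two invocations of Theorem \ref{t:SzT}.
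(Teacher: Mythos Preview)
Your argument is essentially correct but takes a much longer route than the paper. The paper simply partitions the large set(s) into blocks of size at most $|B|$ and invokes Theorem~\ref{t:triples_R} as a black box. For the first bound, write $A=\bigcup_{j=1}^{t} B_j$ with $t\ll |A|/|B|$ and $|B_j|\le|B|$; every collinear triple counted in $\T(A,B,B)$ has its $A\times A$ vertex in some $B_i\times B_j$, so
\[
\T(A,B,B)\ \le\ \sum_{i,j=1}^{t}\T(B_i\cup B_j\cup B)\ \ll\ t^2\,|B|^4\log|B|\ \ll\ |A|^2|B|^2\log|B|.
\]
The second bound is obtained identically by splitting both $A_1$ and $A_2$ into $\ll|A_1|/|B|$ and $\ll|A_2|/|B|$ blocks. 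No direct Szemer\'edi--Trotter computation, no nested dyadic decomposition, and no Cauchy--Schwarz reduction is needed. Your approach does yield the marginally sharper intermediate estimate $\T(A,A,B)\ll|A|^{8/3}|B|^{4/3}\log|B|$, at the cost of a considerably more involved calculation.

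There is also a small gap in your auxiliary estimate: the claim $\sum_{\ell\in L_i} a_\ell^2 \ll |A|^{8/3}|L_i|^{1/3}$ is false when $|L_i|<|A|$. For instance, if $|L_i|=1$ and that single line is axis-parallel in $A\times A$, then $a_\ell=|A|$ and the left side equals $|A|^2\not\ll|A|^{8/3}$. The issue is that your crossover value $2^{j_*}=(|A|^4/|L_i|)^{1/3}$ then exceeds the admissible range $2^j\le|A|$, so the balancing never occurs and one must fall back on the trivial bound $\sum_{\ell\in L_i}a_\ell^2\le |L_i|\,|A|^2\le|A|^3$. This is harmless for the final result: summing $2^i\cdot|A|^3$ over $i\le\log|B|$ contributes only $O(|B|\,|A|^3)\le O(|A|^4)$, so $\T(A,A,B)\ll|A|^4\log|B|$ still holds and your Cauchy--Schwarz reduction goes through.
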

\begin{proof}
Split $A$ onto $t \ll |A|/|B|$ parts $B_j$ of size at most $|B|$.
Then, using Theorem \ref{t:triples_R}, we get
$$
    \T(A,B,B) \le \sum_{i,j=1}^t \T(B_i \times B_j, B, B)
        \ll
            t^2 |B|^4 \log |B|
             \ll |A|^2 |B|^2 \log |B|
$$
as required.
The second bound follows similarly.
This completes the proof.
$\hfill\Box$
\end{proof}

\bigskip

We need a result from \cite{RNRS}, which
is a consequence of the
main theorem from \cite{R}.

\begin{theorem}\label{t:sum-prod}
    Let $A,B,C\subseteq \F_p$, and  let $M = \max(|A|,|BC|).$
    Suppose that $|A||B||BC| \ll p^2$.
    Then
\begin{equation}\label{f:sum-prod_energy}
    \E^{+} (A,C) \ll (|A||BC|)^{3/2} |B|^{-1/2} + M |A||BC| |B|^{-1} \,.
\end{equation}
\end{theorem}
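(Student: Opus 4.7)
The plan is to deduce the bound from Rudnev's point--plane incidence theorem in $\F_p^3$, which is the main result of \cite{R}. The first step is a multiplicative lift of the additive energy equation. Rewriting $a_1 + c_1 = a_2 + c_2$ as $a_1 - a_2 = c_2 - c_1$, and substituting $y_i := b_i c_i \in BC$ for each $(b_1, b_2) \in B^2$, one obtains the polynomial relation
$$
b_1 y_2 - b_2 y_1 = b_1 b_2\, (a_1 - a_2) \,.
$$
Since $(c_1, c_2) \mapsto (y_1, y_2) = (b_1 c_1, b_2 c_2)$ is injective once $(b_1, b_2)$ is fixed, this yields
$$
|B|^2 \cdot \E^+(A, C) \le N \,,
$$
where $N$ counts $6$-tuples $(a_1, a_2, y_1, y_2, b_1, b_2) \in A^2 \times (BC)^2 \times B^2$ obeying the displayed equation.

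Next I interpret $N$ as a point--plane incidence count. Setting $(X, Y, Z) := (y_1, y_2, a_1)$, the equation becomes
$$
-b_2 X + b_1 Y - b_1 b_2 Z + b_1 b_2 a_2 = 0 \,,
$$
which is linear in $(X, Y, Z)$ for each fixed triple $(a_2, b_1, b_2)$. Thus $N = I(\mathcal{P}, \Pi)$ where $\mathcal{P} := BC \times BC \times A$ has $|\mathcal{P}| = |A||BC|^2$ and $\Pi$ is a family of planes indexed by $(a_2, b_1, b_2) \in A \times B^2$ of size $|\Pi| \sim |A||B|^2$ (generic distinctness is verified by comparing normal vectors).

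Now I apply Rudnev's incidence theorem. The hypothesis $|A||B||BC| \ll p^2$ keeps both $|\mathcal{P}|$ and $|\Pi|$ within the range where the nontrivial main term dominates, and one obtains
$$
I(\mathcal{P}, \Pi) \ll (|\mathcal{P}| \cdot |\Pi|)^{3/4} + k \cdot \max(|\mathcal{P}|, |\Pi|) \,,
$$
where $k$ bounds the maximum number of planes of $\Pi$ sharing a common line. The main term evaluates to $(|A|^2 |B|^2 |BC|^2)^{3/4} = |A|^{3/2} |B|^{3/2} |BC|^{3/2}$, and dividing by $|B|^2$ recovers the first summand $(|A||BC|)^{3/2} |B|^{-1/2}$ of the claim. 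A geometric analysis of the pencil of planes $\pi_{a_2, b_1, b_2}$ shows $k \ll M$; combined with $\max(|\mathcal{P}|, |\Pi|) = |A||BC|^2$ (since $|BC| \ge |B|$) and the division by $|B|^2$, this produces the second summand $M|A||BC|/|B|$.

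The main obstacle is twofold. First, one must extract the $(|\mathcal{P}| |\Pi|)^{3/4}$ main term from Rudnev's theorem in the asymmetric regime $|\mathcal{P}| \gg |\Pi|$: the naive asymmetric form $|\mathcal{P}|^{1/2}|\Pi|$ loses a factor of $\sqrt{|BC|/|B|}$ and is not enough, so one must exploit the specific structure of the pencil (or argue via a dyadic refinement). Second, establishing the collinearity bound $k \ll M$ requires a case analysis of lines in $\F_p^3$ lying on multiple planes of the three-parameter family $\pi_{a_2, b_1, b_2}$: a generic line is contained in only $O(1)$ of these planes, while the extremal lines yield the bound $M$ through the algebraic constraints they impose on $(a_2, b_1, b_2)$.
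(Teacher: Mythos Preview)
The paper does not prove this theorem at all: it is quoted from \cite{RNRS} as a black box (``We need a result from \cite{RNRS}, which is a consequence of the main theorem from \cite{R}''), so there is no proof in the paper to compare against.

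Your overall plan --- lift the energy equation multiplicatively and feed it into Rudnev's point--plane theorem --- is indeed how \cite{RNRS} proceeds, but your specific incidence setup does not deliver the stated bound, and you have correctly flagged this yourself without resolving it. The form $I(\mathcal P,\Pi)\ll (|\mathcal P|\,|\Pi|)^{3/4}+k\max(|\mathcal P|,|\Pi|)$ that you invoke is \emph{not} a valid version of Rudnev's theorem. Rudnev gives $I\ll m^{1/2}n+kn$ with $m=\min(|\mathcal P|,|\Pi|)$, $n=\max(|\mathcal P|,|\Pi|)$; the exponent pair $(3/4,3/4)$ only coincides with this when $|\mathcal P|=|\Pi|$, and in the unbalanced regime it is strictly stronger than what is true. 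With your choice $|\mathcal P|=|A||BC|^2$, $|\Pi|=|A||B|^2$ one has $m=|\Pi|$, and the genuine Rudnev main term yields
\[
\E^+(A,C)\ \ll\ \frac{|\Pi|^{1/2}|\mathcal P|}{|B|^2}\ =\ \frac{|A|^{3/2}|BC|^2}{|B|}\,,
\]
which is too large by exactly the factor $(|BC|/|B|)^{1/2}$ you mention. Your suggested fixes (``exploit the structure of the pencil'' or ``dyadic refinement'') do not repair this: the loss is in the incidence inequality itself, not in how it is applied.

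The actual argument in \cite{RNRS} avoids this by arranging a \emph{balanced} configuration. One uses a single $b\in B$ and takes the point set on the surface $\{(b,\,y,\,a_1 b):a_1\in A,\ b\in B,\ y\in BC\}$, so that both the point count and the relevant plane count are of order $|A||B||BC|$; this is precisely why the hypothesis reads $|A||B||BC|\ll p^2$. With $m\asymp n\asymp |A||B||BC|$ the Rudnev bound $m^{1/2}n$ then legitimately equals $(|A||B||BC|)^{3/2}$, and dividing by the over-count gives $(|A||BC|)^{3/2}|B|^{-1/2}$. The collinearity parameter $k$ in that balanced setup is what produces the second term $M|A||BC|/|B|$, and its analysis is carried out in \cite{RNRS}; your claim $k\ll M$ in the two-$b$ setup would need a separate (and different) justification even if the main term were salvageable.
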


\section{The proof of the main result}
\label{sec:proof}


Now let us obtain a lower bound for the difference set of sets with small quotient set.

\begin{theorem}
    Let $A\subset \R$ be a finite set.
    Then
\begin{equation}\label{f:5/3}
    |A-A|^6 |A/A|^{13} \gtrsim |A|^{23} \,.
\end{equation}
    In particular, if $|A/A| \ll |A|$ then $|A-A| \gtrsim  |A|^{5/3}$.
\label{t:5/3}
\end{theorem}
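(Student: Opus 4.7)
The plan is to combine multiplicative level-set pigeonholing with Szemer\'edi--Trotter incidences and the higher-energy estimates from Lemma~\ref{l:E_3_M} and Corollary~\ref{c:R-N_T}, augmented by the ``different projections'' observation described in the introduction. The starting point is the Cauchy--Schwarz bound $\E^\times(A) = \sum_\la |A_\la|^2 \ge |A|^4/|A/A|$; dyadic pigeonholing then isolates a scale $\Delta$ and a set $S\subseteq A/A$ with $|A_\la|\asymp \Delta$ on $S$ and $|S|\Delta^2 \gtrsim |A|^4/|A/A|$.

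Next, for each $\la\in S$ and each $d\in A_\la - A_\la$, the point $(d,\la d)$ lies in the grid $(A-A)\times(A-A)$ because $\la A_\la\subseteq A$. These points lie on the line $y=\la x$, and distinct $\la$ give distinct lines through the origin, so Szemer\'edi--Trotter (Theorem~\ref{t:SzT}) yields
$$
\sum_{\la\in S}|A_\la - A_\la| \ \lesssim\ |A-A|^{4/3}|S|^{2/3}.
$$
For the matching lower bound I would combine the Cauchy--Schwarz inequality $|A_\la - A_\la|\ge |A_\la|^4/\E^{+}(A_\la)$ with a further Cauchy--Schwarz in $\la$, reducing matters to an upper bound on $\sum_{\la\in S}\E^{+}(A_\la)$.

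The control of $\sum_\la\E^{+}(A_\la)$ is the heart of the argument and is exactly where the ``different projections'' idea has to enter. Unfolding this sum as a count of $8$-tuples in $A^8$ subject to an additive relation in one factor and a common-ratio constraint across factors, and applying Cauchy--Schwarz against the multiplicative constraint, reduces the problem to a bound of the shape $\sqrt{\E^{+}_3(A)\,\E^{\times}_3(A)}$. Lemma~\ref{l:E_3_M} applied with $B=A^{-1}$ gives $\E^{+}_3(A)\lesssim |A/A|^2|A|$, and its multiplicative analogue with $B=-A$ gives $\E^{\times}_3(A)\lesssim |A-A|^2|A|$. The rigidity observation from the introduction---that the level sets $A_\la$ cannot all coincide, since otherwise the slopes $\la\in S$ would collapse in contradiction with the pigeonhole lower bound on $|S|$---then delivers, via a second invocation of Szemer\'edi--Trotter or Corollary~\ref{c:R-N_T} on a suitably derived point set, the additional saving needed.

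Combining the two bounds on $\sum_\la|A_\la-A_\la|$ and substituting the pigeonhole estimates $|S|\Delta^2\gtrsim |A|^4/|A/A|$ and $|S|\le |A/A|$ then produces the announced inequality $|A-A|^6 |A/A|^{13}\gtrsim |A|^{23}$; specializing to $|A/A|\ll |A|$ gives $|A-A|\gtrsim |A|^{5/3}$ immediately. The main obstacle I anticipate is the third step: the plain Cauchy--Schwarz cascade with the $\sqrt{\E^{+}_3(A)\E^{\times}_3(A)}$ bound delivers only something like $|A-A|^7|A/A|^{11}\gtrsim |A|^{21}$ (which corresponds to the weaker exponent $10/7$), and it is exactly the projection argument---exploiting quantitatively that distinct level sets cannot all collapse---that upgrades this to the sharp $5/3$ exponent.
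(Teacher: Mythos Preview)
Your outline assembles the right ingredients---dyadic pigeonholing on $|A_\la|$, Szemer\'edi--Trotter, and the $\E^+_3$ bound of Lemma~\ref{l:E_3_M}---but it stops exactly at the hard step. You yourself say that the Cauchy--Schwarz cascade as written gives only an exponent around $10/7$, and that ``it is exactly the projection argument \dots\ that upgrades this''; but you never actually supply that argument. The sentence ``delivers, via a second invocation of Szemer\'edi--Trotter or Corollary~\ref{c:R-N_T} on a suitably derived point set, the additional saving needed'' is a placeholder, not a proof.

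The paper's implementation of the projection idea is specific and quite different from your lines-through-the-origin setup. Rather than the one-dimensional sets $A_\la-A_\la$ and a single line $y=\la x$ per slope, the paper introduces the two-dimensional sets
\[
Q_\la\ :=\ A\times A_\la\ -\ \Delta_\la(A_\la)\ \subseteq\ D\times D,\qquad D=A-A,
\]
and observes that the projection $\pi_\la(x,y)=x-\la y$ sends $Q_\la$ into $D$. Hence $Q_\la$ is covered by at most $|D|$ lines of slope $\la^{-1}$, so the total line family over $\la\in\Lambda$ has size at most $|D|\,|\Lambda|$, and Szemer\'edi--Trotter yields $\sum_{\la\in\Lambda}|Q_\la|\ll |D|^2|\Lambda|^{2/3}$. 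The matching lower bound comes from Cauchy--Schwarz on the trilinear form $\sum_z A_\la(z)A(\la z+x)A_\la(z+y)$, then H\"older and Lemma~\ref{l:E_3_M} applied both to $A$ and to the sets $\tilde A_\la=A\cap\la A$, using $M(\tilde A_\la)\le M^3$. It is this $|D|$-lines-per-slope construction---not one line per $\la$---that realises the ``different projections'' heuristic and produces the exponent $5/3$; Corollary~\ref{c:R-N_T} plays no role in this theorem.

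A secondary issue: the ``multiplicative analogue'' $\E^\times_3(A)\lesssim |A-A|^2|A|$ that you invoke does not follow from Lemma~\ref{l:E_3_M} or from Szemer\'edi--Trotter in any direct way (the relevant curves would not form a pseudo-line system), and the paper's argument never uses such a bound; all the $\E_3$-type control it needs is additive.
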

\begin{proof}
Let $\Pi = A/A$.
Put $M$ equals
    $|\Pi|/|A|$.
Without loss  of generality one can suppose that $0\notin A$.
Let $D=A-A$.
Let also $\mathcal{P} = D \times D$.
Then for any $\la \in \Pi$ one has
$$
    Q_\la := A \times A_\la - \D_\la (A_\la) \subseteq \mathcal{P} \,.
$$
Further, for an arbitrary $\la \in \Pi$ consider a projection $\pi_\la (x,y) = x - \la y$.
Then, it is easy to check that $\pi_\la (Q_\la) \subseteq D$.
In other words, if we denote by $\mathcal{L}_\la$ the set of all lines
of the form $\{ (x,y) ~:~ x-\la y = c\}$, intersecting the set $Q_\la$, we obtain that $|\mathcal{L}_\la| \le |D|$.
Finally, take any set $\L \subseteq \Pi$, $\L = \L^{-1}$, and put $\mathcal{L} = \bigsqcup_{\la \in \L} \mathcal{L}_\la$.
It follows that
\begin{equation}\label{tmp:01.03.2016_1}
    |\mathcal{L}| = \sum_{\la \in \L} |\mathcal{L}_\la| \le |D| |\L| \,.
\end{equation}
By the construction the number of indices $\mathcal{I} (\mathcal{P}, \mathcal{L})$
between points $\mathcal{P}$ and lines $\mathcal{L}$ is at least
$\mathcal{I} (\mathcal{P}, \mathcal{L}) \ge \sum_{\la \in \L} |Q_\la|$.
 Applying Szemer\'{e}di--Trotter Theorem \ref{t:SzT}, using formula (\ref{tmp:01.03.2016_1}),  and making simple calculations, we get
\begin{equation}\label{tmp:01.03.2016_2}
    \sum_{\la \in \L} |Q_\la| \le \mathcal{I} (\mathcal{P}, \mathcal{L})
        \ll (|\mathcal{L}| |\mathcal{P}|)^{2/3} + |\mathcal{L}| + |\mathcal{P}|
                \ll
                    |D|^2 |\L|^{2/3} \,.
\end{equation}
Hence, our task is to find a good lower bound for the sum $\sum_{\la \in \L} |Q_\la|$.
For any  $\la \in \Pi$, we have
$$
    |A| |A_\la|^2 = \sum_{x,y} \sum_z A_\la (z) A (\la z+x) A_\la (z +y)
        = \sum_{(x,y) \in Q_\la} \sum_z A_\la (z) A (\la z+x) A_\la (z +y)\,,
$$
and, thus, by the Cauchy--Schwarz inequality, we get
$$
    |A| |A_\la|^2 \le |Q_\la|^{1/2} \cdot
        \left( \sum_{x,y} \left( \sum_z A_\la (z) A (\la z+x) A_\la (z +y) \right)^2 \right)^{1/2} \,.
$$
Summing over $\la \in \L$ and applying the Cauchy--Schwarz inequality once more time, we obtain
$$
    |A|^2 (\E^\times_\L (A))^2 := |A|^2 \left( \sum_{\la \in \L} |A_\la|^2 \right)^2 \le \sum_{\la \in \L} |Q_\la|
        \cdot
            \sum_{\la \in \L}  \sum_{x,y} \left( \sum_z A_\la (z) A (\la z+x) A_\la (z +y) \right)^2
                =
$$
\begin{equation}\label{tmp:05.03.2016_1}
    =
        \sum_{\la \in \L} |Q_\la| \cdot \sum_{\la \in \L} \sum_w (A_\la \c A_\la)^2 (w) (A \c A) (\la w)
            =
                \sigma_1 \cdot \sigma_2
    \,.
\end{equation}
Let us estimate the sum $\sigma_2$.
Putting $\t{A}_\la = A\cap \la A$, we see that by the H\"{o}lder inequality
the following holds
$$
    \sigma_2 = \sum_{\la \in \L} \sum_w (A_\la \c A_\la)^2 (w/\la) (A \c A) (w)
        =
            \sum_{\la \in \L} \sum_w (\t{A}_\la \c \t{A}_\la)^2 (w) (A \c A) (w)
                \le
$$
$$
    \le
        (\E^{+}_3 (A))^{1/3}  \cdot \sum_{\la \in \L} (\E^{+}_3 (\t{A}_\la))^{2/3} \,.
$$
Put $\L \subseteq \Pi$, $\L = \L^{-1}$  such that
\begin{equation}\label{tmp:05.03.2016_1}
    \frac{|A|^3}{M} \le \E^{\times} (A) \lesssim \E_\L (A) \,.
\end{equation}
The first bound in (\ref{tmp:05.03.2016_1}) is just the Cauchy--Schwarz inequality (\ref{f:E_CS}) and the existence of the set $\L$ follows from the simple pigeonholing.
In particular, it follows that $|\t{A}_\la| = |A_\la| \gg |A|/M$ and hence $|\L| \ll M |A|$.
Because of $|A/A| \le M |A|$, we clearly have $M(A) \le M^2$.
Applying Lemma \ref{l:E_3_M} and the notation from (\ref{fd:E_3_M}) for the set $A$ as well for the sets $\t{A}_\la$, we 
get
$$
    \sigma_2 \lesssim M^{2/3} |A| \cdot \sum_{\la \in \L} M^{2/3} (\t{A}_\la) |\t{A}_\la|^2 \,.
$$
It is easy to see that
\begin{equation}\label{tmp:16.03.2016_1}
    M (\t{A}_\la) \le \frac{|A \t{A}_\la|^2}{|A| |\t{A}_\la|} \le \frac{|AA|^2}{|A| |\t{A}_\la|}
        \le
            \frac{M^2 |A|}{|\t{A}_\la|}
            \le M^3 \,,
\end{equation}
and hence
$$
    \sigma_2 \lesssim M^{8/3} |A| \cdot \E^\times_\L (A)  \,.
$$
Here we have used the fact $\L = \L^{-1}$.
Returning to (\ref{tmp:05.03.2016_1}) and using the Cauchy--Schwarz inequality, we get
$$
    \sum_{\la \in \L} |Q_\la| \gtrsim \frac{|A|^4}{M^{11/3}} \,.
$$
Combining the last bound with (\ref{tmp:01.03.2016_2}), we obtain
$$
    \frac{|A|^{12}}{M^{11}} \lesssim |D|^6 |\L|^2 \le M^2 |A|^2 |D|^6
$$
as required.
$\hfill\Box$
\end{proof}

\bigskip

\begin{remark}
    Careful analysis of
    the proof (e.g. one should use the estimate $M(\t{A}_\la) \le M^2 |A|/|\t{A}_\la|$ from (\ref{tmp:16.03.2016_1}))
    shows that we have obtained an upper bound for
    the higher energy
    $\E^\times_8 (A)$.
    Namely,
    $$
        |A|^{7} \E^\times_8 (A) \lesssim |A/A|^6 |A-A|^6 \,.
    $$
    The last bound is always better than Elekes' inequality for quotient sets \cite{Elekes}
    $$
        |A|^5 \ll |A/A|^2 |A\pm A|^2 \,.
    $$
\end{remark}

\bigskip

Now let us prove our second main result, which corresponds to the main theorem from \cite{Sh_3G}.

\begin{theorem}
    Let $A \subset \R$ be a finite set, and $|AA| \le M |A|$ or $|A/A| \le M|A|$.
    Then for any $\a \neq 0$ one has
\begin{equation}\label{f:E_M_a}
    \E^\times (A+\a) \ll M^4 |A|^2 \log |A| \,.
\end{equation}
    In particular,
\begin{equation}\label{f:E_M_a'}
    |AA+A+A| \ge |(A+1)(A+1)| \gg \frac{|A|^2}{M^4 \log |A|} \,.
\end{equation}
    Finally, for any $\a,\beta \neq 0$ the following holds
\begin{equation}\label{f:E_M_a''}
    |A+\a A+ \beta A| \gg \frac{|A|^2}{M^{6} \log |A|} \,.
\end{equation}
\label{t:E_M_a}
\end{theorem}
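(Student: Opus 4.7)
My plan is to establish \eqref{f:E_M_a}, \eqref{f:E_M_a'}, and \eqref{f:E_M_a''} in sequence. The technical heart is the multiplicative-energy bound \eqref{f:E_M_a}; from it \eqref{f:E_M_a'} follows immediately by Cauchy--Schwarz, while \eqref{f:E_M_a''} requires an additional Pl\"unnecke-style manipulation.

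For \eqref{f:E_M_a}, I would rescale $A\mapsto A/\alpha$ to reduce to $\alpha=1$ and invoke Lemma~\ref{l:Plunnecke} to treat $|AA|\le M|A|$ and $|A/A|\le M|A|$ as equivalent up to squaring $M$. The key identity $\E^\times(A+1)=\sum_\lambda N_\lambda^2$, where $N_\lambda=|(A+1)\cap \lambda(A+1)|$, interprets $N_\lambda$ as the number of points of $A\times A$ on the line through $(-1,-1)$ of slope $\lambda$. A dyadic decomposition in $N_\lambda$ combined with Szemer\'edi--Trotter (Theorem~\ref{t:SzT}) and the collinear-triples bound (Corollary~\ref{c:R-N_T}) reduces matters to a BSG-type extraction: were $\E^\times(A+1)$ to exceed $M^4|A|^2\log|A|$, one would obtain a large $A'\subseteq A$ with both $|A'A'|$ and $|A'+A'|$ small enough to violate the sum-product phenomenon. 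Obtaining the sharp exponent $M^4$ requires careful bookkeeping of the $M$-factors and is the technically hardest point.

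For \eqref{f:E_M_a'}, Cauchy--Schwarz gives $|BB|\ge |B|^4/\E^\times(B)$ for any set $B$; applied with $B=A+1$ and the inclusion $(A+1)(A+1)\subseteq AA+A+A+1$ (whose right-hand side has the same cardinality as $AA+A+A$), this immediately yields the claim.

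For \eqref{f:E_M_a''}, apply \eqref{f:E_M_a} to both $A+\alpha$ and $A+\beta$. The mixed-energy Cauchy--Schwarz bound $\E^\times(A+\alpha,A+\beta)\le\sqrt{\E^\times(A+\alpha)\,\E^\times(A+\beta)}\ll M^4|A|^2\log|A|$ gives $|(A+\alpha)(A+\beta)|\ge |A|^4/\E^\times(A+\alpha,A+\beta)\gg|A|^2/(M^4\log|A|)$, and the inclusion $(A+\alpha)(A+\beta)\subseteq AA+\alpha A+\beta A+\alpha\beta$ yields $|AA+\alpha A+\beta A|\gg|A|^2/(M^4\log|A|)$. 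The final step, stripping the $AA$ to pass from $|AA+\alpha A+\beta A|$ to $|A+\alpha A+\beta A|$, is the subtlest: a naive Ruzsa triangle is too lossy, because the mixed set $A+AA$ is not controlled by $|AA|\le M|A|$ alone. Instead, one uses a multiplicative Ruzsa covering $AA\subseteq X\cdot A$ with $|X|\ll M$, so that $AA+\alpha A+\beta A\subseteq \bigcup_{x\in X}(xA+\alpha A+\beta A)$; a pigeonhole argument combined with a reapplication of Pl\"unnecke--Ruzsa to transfer back to the original coefficients $\alpha,\beta$ costs two further factors of $M$, giving $|A+\alpha A+\beta A|\gg|A|^2/(M^6\log|A|)$.
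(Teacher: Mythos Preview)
Your plan for \eqref{f:E_M_a'} is correct and matches the paper. The other two parts have genuine gaps.

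\textbf{On \eqref{f:E_M_a}.} The BSG/sum-product contradiction you sketch is not how one gets the $M^4$ bound, and I do not see how to make it work: the quantity $\E^\times(A+1)=\sum_\la N_\la^2$ counts pairs of points of $A\times A$ collinear with the single point $(-1,-1)$, i.e.\ incidences with a \emph{pencil} of lines, so raw Szemer\'edi--Trotter gives nothing, and Corollary~\ref{c:R-N_T} applied to $A$ itself does not see $M$ at all. The paper's key (and non-obvious) move is to pass from $A$ to $\Pi=AA$ (respectively $Q=A/A$): by \eqref{f:T_energy} and Corollary~\ref{c:R-N_T} one has
\[
\sum_{a,a'\in A}\E^\times(\Pi+\a a,\Pi+\a a')=\T(-\a A,\Pi,\Pi)\ll |\Pi|^4\log|A|=M^4|A|^4\log|A|,
\]
so an average $a,a'$ gives $\E^\times(\Pi+\a a,\Pi+\a a')\ll M^4|A|^2\log|A|$; since $A\subseteq \Pi/a$ and $A\subseteq\Pi/a'$, this dominates $\E^\times(A+\a)$. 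In short: apply collinear triples to the product set, not to $A$.

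\textbf{On \eqref{f:E_M_a''}.} Your route via $(A+\a)(A+\beta)\subseteq AA+\a A+\beta A+\a\beta$ is fine up to that point, but the final ``stripping'' step fails. A Ruzsa covering $AA\subseteq X\cdot A$ gives $AA+\a A+\beta A\subseteq\bigcup_{x\in X}(xA+\a A+\beta A)$, and after pigeonholing you only control $|xA+\a A+\beta A|=|A+(\a/x)A+(\beta/x)A|$ for some \emph{unknown} $x$. There is no Pl\"unnecke--Ruzsa mechanism that compares $|A+\a'A+\beta'A|$ across different dilate pairs $(\a',\beta')$; additive doubling does not transfer between dilated sumsets. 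The paper avoids this entirely: after using Lemma~\ref{l:sigma&E} to arrange $\sum_{\la\in A}|A_\la|\gg |A|^2/M$, it considers the restricted representation function
\[
n(x)=|\{(a_1+\a)(a'_1+\beta)=x:\ a_1\in A,\ a'_1\in A_{a_1}\}|\,,
\]
where the constraint $a'_1\in A_{a_1}=A\cap a_1^{-1}A$ forces $a_1a'_1\in A$, so that $(a_1+\a)(a'_1+\beta)\in A+\a A+\beta A+\a\beta$ directly, with no $AA$ to strip. Then Cauchy--Schwarz against $\sum_x n(x)^2\le\E^\times(Q+\a/a,Q+\beta/a')\ll M^4|A|^2\log|A|$ (same averaging trick as above, now with two shifts) finishes the job. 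The restriction $a'_1\in A_{a_1}$ is precisely the missing idea.
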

\begin{proof}
Without loss  of generality one can suppose that $0\notin A$.
Let $\Pi = AA$, $Q=A/A$.
Applying the second estimate of Corollary \ref{c:R-N_T} with $B=-\a A$, $A_1=A_2=\Pi$ as well as  formula (\ref{f:T_energy}), we get
$$
    \sum_{a,a'\in A} \E^{\times} (\Pi + \a a, \Pi + \a a') \ll M^4 |A|^4 \log |A| \,.
$$
Thus there are $a,a'\in A$ such that $\E^{\times} (\Pi + \a a, \Pi + \a a') \ll M^4 |A|^2 \log |A|$.
In other words,
$$
    \E^{\times} (\Pi/a + \a, \Pi/a' + \a ) = \E^{\times} (\Pi + \a a, \Pi + \a a') \ll M^4 |A|^2 \log |A| \,.
$$
Clearly, $A \subseteq \Pi/a$, $A \subseteq \Pi/a'$ and hence $\E^\times (A+\a) \ll M^4 |A|^2 \log |A|$.
To obtain the same estimate with $Q$ just note that for any $a\in A$ one has $A \subseteq Qa$ and apply the same arguments with $B = -\a A^{-1}$.
Further,
by estimate (\ref{f:E_M_a}) with $\a=1$ and bound (\ref{f:E_CS}), we have
$$
    |AA+A+A| = |AA+A+A+1| \ge |(A+1)(A+1)| \gg \frac{|A|^2}{M^4 \log |A|}
$$
and (\ref{f:E_M_a'}) follows.

It remains to prove (\ref{f:E_M_a''}).
Using Lemma \ref{l:sigma&E}, we find $z$ such that
\begin{equation*}\label{tmp:x_j}
    \frac{|A|^2}{M}
    \le \sum_{\la \in z A} |(z A)\cap \la^{-1} (z A)| \,.
\end{equation*}
With some abuse of the notation redefine $A$ to be $z A$ and thus, we have
\begin{equation}\label{tmp:x_j}
    \frac{|A|^2}{M} \le \sum_{\la \in A} |A\cap \la^{-1} A| = \sum_{\la \in A} |A_\la|\,.
\end{equation}
Further, using the previous arguments, we get
\begin{equation}\label{tmp:12.03.2016_1}
    \sum_{a,a'\in A} \E^{\times} (Q + \a/a, Q + \beta/a')
        \ll
            M^4 |A|^4 \log |A| \,,
\end{equation}
and
\begin{equation}\label{tmp:12.03.2016_1'}
    \sum_{a,a'\in A} \E^{\times} (\Pi + \a a, \Pi + \beta a')
        \ll
            M^4 |A|^4 \log |A| \,.
\end{equation}
Let us consider the case of the set $Q$, the second situation is similar.
From (\ref{tmp:12.03.2016_1}), we see that
there are $a,a'\in A$ such that
$$
    \sigma :=
    |\{ (q_1 a + \a) (q'_1 a' + \beta) = (q_2 a + \a) (q'_2 a' + \beta) ~:~ q_1,q'_1,q_2,q'_2 \in Q \}|
        =
$$
$$
        =
    \E^{\times} (Q + \a/a, Q + \beta/a')
        \ll
            M^4 |A|^2 \log |A| \,.
$$
Using the inclusion $A\subseteq Qa$, $a\in A$ once more time,
it is easy to
check
that
$$
    \sigma
        \ge
        |\{ (a_1  + \a) (a'_1 + \beta) = (a_2 + \a) (a'_2 + \beta)
            ~:~ a_1,a_2 \in A,\, a'_1 \in A_{a_1},\, a'_2 \in A_{a_2} \}|
            =
                \sum_x n^{2} (x) \,,
$$
where
$$
    n(x) = |\{ (a_1  + \a) (a'_1 + \beta) = x ~:~ a_1 \in A,\, a'_1 \in A_{a_1} \}| \,.
$$
Clearly, the support of the function $n(x)$ is $A+\a A + \beta A +\a \beta$.
Using the Cauchy--Schwarz inequality and
bound
(\ref{tmp:x_j}), we obtain
$$
   \frac{|A|^4}{M^2}
        \le
    \left( \sum_{\la \in A} |A_\la| \right)^2
    = \left( \sum_{x} n(x) \right)^2 \le |A+\a A + \beta A| \cdot \sum_x n^{2} (x)
        \le
$$
$$
        \le
            |A+\a A + \beta A| \cdot \sigma
        \ll |A+\a A + \beta A| \cdot M^4 |A|^2 \log |A|
$$
as required.
$\hfill\Box$
\end{proof}

\section{Further remarks}
\label{sec:further}


Now let us make some further remarks on sets with small quotient/product set.
First of all let us say a few words about multiple  sumsets $kA$ of sets $A$ with small multiplicative doubling.
As was noted before when $k$ tends to infinity the best results in the direction
were obtained
in \cite{BC}.
For small $k>3$ another methods work.
We follow the arguments from \cite{K_mult} with some modifications.

\bigskip

Suppose that  $A\subset \Gr$ is a finite set, where $\Gr$ is an abelian group with the group operation $\times$.
Put
$
    \| A \|_{\mathcal{U}^{k}}
$
to
be Gowers non--normalized $k$th--norm \cite{Gow_m} of the characteristic function of $A$ (in multiplicative form), see, say \cite{s_energy}.
For example, $\| A \|_{\mathcal{U}^{2}} = \E^\times (A)$ is the multiplicative energy of $A$ and
$$
    \| A \|_{\mathcal{U}^{3}} = \sum_{\la \in A/A} \E^\times (A_\la) \,.
$$
Moreover, the induction property for Gowers norms holds, see \cite{Gow_m}
\begin{equation}\label{f:Gowers_ind}
    \| A \|_{\mathcal{U}^{k+1}} = \sum_{\la \in A/A} \| A_\la \|_{\mathcal{U}^{k}} \,.
\end{equation}

It was proved in \cite{Gow_m} that $k$th--norms of the characteristic function of any set are connected to each other.
 In \cite{s_energy} the author shows that the connection for the non--normalized norms does not depend on the size of $\Gr$. Here we formulate a particular case of Proposition 35 from \cite{s_energy}, which connects $\| A \|_{\mathcal{U}^{k}}$ and $\| A \|_{\mathcal{U}^{2}}$, see Remark 36 here.

\begin{lemma}
    Let $A$ be a finite subset of an abelian group $\Gr$ with the group operation $\times$.
    Then for any integer $k\ge 1$ one has
$$
    \| A \|_{\mathcal{U}^{k}} \ge \E^\times (A)^{2^k-k-1} |A|^{-(3\cdot 2^k -4k -4)} \,.
$$
\label{l:Gowers_char}
\end{lemma}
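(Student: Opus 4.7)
The plan is to proceed by induction on $k$, using the recursion (\ref{f:Gowers_ind}) as the engine of the inductive step. The base case $k = 1$ is the equality $\|A\|_{\mathcal{U}^1} = |A|^2$: the claimed bound reads $\E^\times(A)^0 |A|^{2} = |A|^2$ since $2^1 - 1 - 1 = 0$ and $3\cdot 2 - 4 - 4 = -2$. The case $k = 2$ is likewise an equality, as $\|A\|_{\mathcal{U}^2} = \E^\times(A)$ and the exponents reduce to $1$ and $0$.

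For the inductive step from $k$ to $k+1$, I would apply (\ref{f:Gowers_ind}) and substitute the induction hypothesis on each slice $A_\la$, obtaining
\[
\|A\|_{\mathcal{U}^{k+1}} = \sum_\la \|A_\la\|_{\mathcal{U}^k} \ge \sum_\la \E^\times(A_\la)^{a} |A_\la|^{-b},
\]
with $a := 2^k - k - 1$ and $b := 3\cdot 2^k - 4k - 4$. It then remains to show that this last sum is at least $\E^\times(A)^{2a+k}|A|^{-(2b+4k)}$, matching the desired bound at level $k+1$. The available tools are the pointwise estimate $\E^\times(A_\la) \ge |A_\la|^4/|A_\la/A_\la| \ge |A_\la|^4/|A|^2$; H\"older's inequality applied to the sum over $\la$; the moment identities $\sum_\la |A_\la| = |A|^2 - |A|$ and $\sum_\la |A_\la|^2 = \E^\times(A)$; and the power-mean estimate $\sum_\la |A_\la|^p \ge |A/A|^{1-p/2} \E^\times(A)^{p/2}$ for $p \ge 2$. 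A useful identity is $4a - b = 2^k$: after replacing $\E^\times(A_\la)^a$ by the lower bound $|A_\la|^{4a}/|A|^{2a}$, the task reduces to estimating $\sum_\la |A_\la|^{2^k}$ from below.

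The main obstacle is the exponent bookkeeping. A single direct application of Cauchy--Schwarz yields weaker bounds than claimed (for example $\|A\|_{\mathcal{U}^3} \gg \E^\times(A)^2/|A|^4$, whereas the claim demands $\E^\times(A)^4/|A|^8$), so one must either iterate the argument or apply H\"older with carefully chosen $k$-dependent exponents in order to produce exactly the recursion $(a,b) \mapsto (2a+k,\ 2b+4k)$ required to close the induction. Verifying that admissible H\"older exponents close the recursion at every level, and that no slack appears which breaks the chain of inequalities, is the delicate step at the heart of the argument.
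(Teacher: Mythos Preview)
The paper does not prove this lemma at all: it is quoted as a particular case of Proposition~35 (and Remark~36) from \cite{s_energy}, so there is no in--paper argument to compare against. Your task was therefore to supply a proof from scratch, and the proposal as it stands does not do so.

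Your inductive framework via (\ref{f:Gowers_ind}) is the natural thing to try, and the base cases and the exponent recursion $(a,b)\mapsto(2a+k,\,2b+4k)$ are computed correctly. The gap is in the reduction you suggest for the inductive step. You propose to replace $\E^\times(A_\la)^a$ by $|A_\la|^{4a}/|A|^{2a}$ and then bound $\sum_\la |A_\la|^{2^k}$ from below. Carry this out at the first nontrivial step $k=2\to k=3$ (so $a=1$, $b=0$): one is reduced to showing
\[
|A|^{-2}\sum_\la |A_\la|^{4} \;\ge\; \E^\times(A)^{4}\,|A|^{-8},
\qquad\text{i.e.}\qquad
\E^\times_4(A) \;\ge\; \E^\times(A)^{4}\,|A|^{-6}.
\]
But this inequality is \emph{false}. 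For $A$ a finite subgroup of size $n$ one has $\E^\times_4(A)=n^5$ while $\E^\times(A)^4/|A|^6=n^{12}/n^6=n^6$, so the reduced inequality fails by a factor of $n$. (The target bound $\|A\|_{\mathcal{U}^3}\ge \E^\times(A)^4/|A|^8$ itself holds with equality for subgroups, so the loss is entirely in your pointwise replacement $\E^\times(A_\la)\ge |A_\la|^4/|A|^2$.) No choice of H\"older exponents applied afterwards can rescue a reduction that is already false, so the plan as written cannot be completed; the ``delicate step'' you flag is not merely delicate but requires a genuinely different idea than the tools you list. The proof in \cite{s_energy} does not throw away the energy of the slices in this way; you should consult that argument rather than try to force the present scheme.
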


Now let us prove a lower bound for $|kA|$, where $A$ has small product/quotient set.
The obtained estimate gives us a connection between  the size of sumsets of a set and Gowers norms of its characteristic function.

\begin{proposition}
    Let $A\subset \R$ be a finite set, and $k$ be a positive integer.
    Then
\begin{equation}\label{f:k-fold_my_Gowers}
    |2^k A|^2 \gg_k \| A \|_{\mathcal{U}^{k+1}} \cdot \log^{-k} |A| \,.
\end{equation}
\label{p:k-fold_my_Gowers}
\end{proposition}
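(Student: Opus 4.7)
The plan is to proceed by induction on $k\ge 1$, combining the recursion (\ref{f:Gowers_ind}) for the Gowers non-normalized norms with an iterated Solymosi-type sum-product argument. Via a standard reduction (restricting to $A\subset\R_{>0}$ and treating slopes of each sign separately) one may assume all line-incidence arguments take place in a single quadrant of $\R^2$, so that the angular sectors used below are genuinely disjoint.

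For the base case $k=1$ one needs $|A+A|^2\gg \E^\times(A)\log^{-1}|A|$, which is Solymosi's classical bound: for each $\la\in A/A$ the set $\{(a,\la a):a\in A_\la\}$ of size $|A_\la|$ lies on the line of slope $\la$ through the origin inside $A\times A$, and a dyadic pigeonhole on $|A_\la|\sim\Delta$ together with the disjointness of the angular sectors carrying sumsets of consecutive line-slices in that dyadic class yields the claim.

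For the inductive step $k\to k+1$, I would first apply (\ref{f:Gowers_ind}) and the induction hypothesis to each $A_\la$ (using $|A_\la|\le|A|$) to get
$$
\|A\|_{\mathcal{U}^{k+2}} \;=\; \sum_{\la\in A/A}\|A_\la\|_{\mathcal{U}^{k+1}} \;\ll_k\; \log^{k}|A|\cdot \sum_{\la\in A/A}|2^{k}A_\la|^{2}\,,
$$
so that the proof reduces to establishing the \emph{lifted} Solymosi estimate
$$
\sum_{\la\in A/A}|2^{k}A_\la|^{2}\;\ll\;|2^{k+1}A|^{2}\log|A|\,.
$$
The key observation for this is that $\la A_\la\subseteq A$ implies $\la\cdot(2^{k}A_\la)=2^{k}(\la A_\la)\subseteq 2^{k}A$, so the lift $L^{(k)}_\la:=\{(s,\la s):s\in 2^{k}A_\la\}$ of size $|2^{k}A_\la|$ sits on the line of slope $\la$ through the origin inside $(2^{k}A)\times(2^{k}A)$. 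The sumsets $L^{(k)}_{\la_i}+L^{(k)}_{\la_{i+1}}$ for distinct slopes inject into $(2^{k+1}A)\times(2^{k+1}A)$; after restricting to a dyadic class $|2^{k}A_\la|\sim\Delta$ and ordering its slopes, consecutive sumsets contribute $\gg\Delta^{2}$ points each and lie in pairwise disjoint angular sectors of the quadrant, giving a bound of $|2^{k+1}A|^{2}$ on the total. Dyadic pigeonholing over the $O_k(\log|A|)$ levels produces the claimed estimate and closes the induction.

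The main technical obstacle is the disjointness of the angular sumset sectors in the lifted Solymosi step, which is entirely standard once one has restricted to a single quadrant but does require care about signs in the preliminary reduction. The single $\log|A|$ factor picked up at each inductive step (through dyadic pigeonholing plus Solymosi's inequality) is exactly what accounts for the final $\log^{-k}|A|$ loss in (\ref{f:k-fold_my_Gowers}).
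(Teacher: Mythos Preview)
Your proof is correct and follows essentially the same route as the paper. The ``lifted Solymosi'' inequality you establish, namely $\sum_{\la}|2^{k}A_\la|^{2}\ll_k |2^{k+1}A|^{2}\log|A|$, is exactly what the paper quotes as ``the main argument of \cite{K_mult}'' (their inequality $|2^kA|^2\ge\sum_{j}|2^{k-1}A_{s_j}||2^{k-1}A_{s_{j+1}}|$ combined with dyadic pigeonholing); the only cosmetic difference is that the paper applies this step first and the induction hypothesis second, whereas you reverse the order.
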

\begin{proof}
    We follow the arguments from \cite{K_mult}.
    Let us use the induction.
    The case $k=1$ was obtained in \cite{soly}, so assume that $k>1$.
    Put $L= \log |A|$.

Without loss  of generality one can suppose
that
$0\notin A$.
    Taking any subset $S=\{ s_1 < s_2< \dots < s_r \}$ of $A/A$, we have by the main argument of \cite{K_mult}
\begin{equation}\label{tmp:25.10.2015_1}
    |2^k A|^2 \ge \sum_{j=1}^{r-1} |2^{k-1} A_{s_j}| |2^{k-1} A_{s_{j+1}}|  \,.
\end{equation}
    Now let $S$ be a subset of $A/A$ such that $\sum_{s\in S} |2^{k-1} A_{s}|^2 \gg_k L^{-1} \sum_{s} |2^{k-1} A_{s}|^2$
    and for any two numbers $s,s'$ the quantities $|2^{k-1} A_{s}|, |2^{k-1} A_{s'}|$ differ at most twice on $S$.
    Clearly, such $S$ exists by the pigeonhole principle.
    Further, put $\D = \min_{s\in S} |2^{k-1} A_{s}|$.
    Thus, putting the set $S$ into (\ref{tmp:25.10.2015_1}), we get
$$
    |2^k A|^2 \gg_k \D \sum_{s\in S} |2^{k-1} A_{s}| \gg_k L^{-1} \sum_{s} |2^{k-1} A_{s}|^2 \,.
$$
    Now by the induction hypothesis and formula (\ref{f:Gowers_ind}),  we see that
$$
    |2^k A|^2 \gg_k L^{-k} \sum_{s} \| A_s \|_{\mathcal{U}^k} = L^{-k} \| A \|_{\mathcal{U}^{k+1}} \,.
$$
    This completes the proof.
$\hfill\Box$
\end{proof}

\bigskip

Proposition
above has an immediate consequence.

\begin{corollary}
    Let $A\subset \R$ be a finite set,  and
    $k$ be a positive integer.
 Let also $M\ge 1$, and
\begin{equation}\label{cond:k-fold_my}
    |AA| \le M |A| \quad \mbox{ or } \quad |A/A| \le M |A|\,.
\end{equation}
    Then
\begin{equation}\label{f:k-fold_my}
    |2^k A| \gg_k  |A|^{1+k/2} M^{-u_k} \cdot \log^{-k/2} |A| \,,
\end{equation}
where
$$
    u_k = 2^k-k/2-1 \,.
$$
\label{c:k-fold_my}
\end{corollary}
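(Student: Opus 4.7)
The plan is to combine Proposition \ref{p:k-fold_my_Gowers} with Lemma \ref{l:Gowers_char}, using the hypothesis to lower-bound the multiplicative energy $\E^\times(A)$. This reduces the corollary to a short calculation with no new ideas.

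First, I would record the multiplicative-energy consequence of (\ref{cond:k-fold_my}). Writing $\E^\times(A) = \sum_{\la \in A/A} |A_\la|^2 \ge |A|^4/|AA|$ (or the same bound with $A/A$ in the denominator) via Cauchy--Schwarz as in (\ref{f:E_CS}), either hypothesis gives
$$
    \E^\times(A) \ge \frac{|A|^3}{M} \,.
$$
Next, I would apply Lemma \ref{l:Gowers_char} with exponent $k+1$ in place of $k$ to obtain
$$
    \| A \|_{\mathcal{U}^{k+1}} \ge \E^\times(A)^{2^{k+1}-k-2} \, |A|^{-(3\cdot 2^{k+1}-4k-8)} \ge \frac{|A|^{\,3(2^{k+1}-k-2)-(3\cdot 2^{k+1}-4k-8)}}{M^{\,2^{k+1}-k-2}} = \frac{|A|^{k+2}}{M^{\,2^{k+1}-k-2}} \,,
$$
after simplifying the exponent on $|A|$.

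Finally, I would plug this lower bound into Proposition \ref{p:k-fold_my_Gowers} to get
$$
    |2^k A|^2 \gg_k \| A \|_{\mathcal{U}^{k+1}} \cdot \log^{-k}|A| \gg_k \frac{|A|^{k+2}}{M^{\,2^{k+1}-k-2}} \cdot \log^{-k}|A| \,,
$$
and take square roots. Note that $(2^{k+1}-k-2)/2 = 2^k - k/2 - 1 = u_k$, so the exponent on $M$ matches the claim exactly, and the exponent on $|A|$ becomes $1+k/2$ while the logarithmic loss is $\log^{-k/2}|A|$, yielding (\ref{f:k-fold_my}).

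There is no real obstacle here; the corollary is essentially bookkeeping. The only point to check carefully is the arithmetic of exponents: the $|A|$-exponents from Lemma \ref{l:Gowers_char} and from $\E^\times(A) \ge |A|^3/M$ are arranged precisely so that the negative power of $|A|$ in the lemma is cancelled by three times the positive exponent $2^{k+1}-k-2$ coming from the energy, leaving $|A|^{k+2}$. This is what makes the final bound come out with the clean exponent $1+k/2$ on $|A|$.
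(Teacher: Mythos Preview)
Your proof is correct and follows essentially the same route as the paper: combine Proposition \ref{p:k-fold_my_Gowers} with Lemma \ref{l:Gowers_char} (applied with parameter $k+1$), feed in the Cauchy--Schwarz bound $\E^\times(A)\ge |A|^3/M$ from the hypothesis, and check the exponent arithmetic. The only difference is the order of presentation, which is immaterial.
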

\begin{proof}
Combining Proposition \ref{p:k-fold_my_Gowers} and Corollary \ref{c:k-fold_my}, we obtain
\begin{equation}\label{tmp:25.10.2015_2}
    |2^k A|^2 \gg_k \log^{-k} |A| \cdot \E^\times (A)^{2^{k+1}-k-2} |A|^{-(3\cdot 2^{k+1} -4k -8)} \,.
\end{equation}
By assumption (\ref{cond:k-fold_my}) and the Cauchy--Schwarz inequality (\ref{f:E_CS}), we get
$
    \E^\times (A) \ge |A|^3 /M
$.
Substituting the last bound into (\ref{tmp:25.10.2015_2}), we have
$$
    |2^k A|^2 \gg_k \log^{-k} |A| \cdot |A|^{k+2} M^{-(2^{k+1}-k-2)}
$$
as required.
$\hfill\Box$
\end{proof}

\bigskip

Thus, for $|AA| \ll |A|$ or $|A/A| \ll |A|$, we have, in particular, that $|4A| \gtrsim |A|^2$.
Actually, a stronger bound takes place.
We thank
to S.V. Konyagin for pointed this fact to us.

\begin{corollary}
    Let $A\subset \R$ be a finite set with $|A/A| \ll |A|$.
    Then
$$
    |4A| \gtrsim |A|^{2+c} \,,
$$
    where $c>0$ is an absolute constant.
\end{corollary}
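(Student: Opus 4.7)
The plan is to combine three ingredients already established in the paper: the lower bound $|3A|\gg |A|^{2}/\log|A|$ from Theorem \ref{t:E_M_a} (applied with $\alpha=\beta=1$); the dyadic lower bound $|2^{k}A|\gg_{k}|A|^{1+k/2}\log^{-k/2}|A|$ from Corollary \ref{c:k-fold_my} (which is available under the hypothesis $|A/A|\ll |A|$ since Pl\"{u}nnecke then forces $|AA|\ll |A|$, i.e.\ $M=O(1)$); and the Pl\"{u}nnecke--Ruzsa inequality of Lemma \ref{l:Plunnecke}, specialised to the standard $m=0$ case $|nB|\le K^{n}|A|$.

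Apply Pl\"{u}nnecke--Ruzsa to the pair $(3A,A)$ with the tautological doubling constant $K:=|4A|/|3A|$: since $|3A+A|=K\cdot|3A|$, for every positive integer $n$ we obtain
$$|nA|\le K^{n}\cdot |3A|,$$
which rearranges as $|4A|^{n}\ge |3A|^{n-1}\cdot |nA|$. Specialising to $n=2^{k}$ and substituting the two lower bounds above gives
$$|4A|^{2^{k}}\gtrsim |A|^{2(2^{k}-1)+1+k/2}=|A|^{2^{k+1}-1+k/2},$$
that is,
$$|4A|\gtrsim |A|^{\,2-2^{-k}+k\cdot 2^{-k-1}}.$$
The exponent $2-2^{-k}+k\cdot 2^{-k-1}$ is maximised over positive integers at $k=3$ (equivalently $k=4$), where it equals $2+1/16$. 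This yields $|4A|\gtrsim |A|^{\,2+1/16}$, which is the statement of the Corollary with $c=1/16$.

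There is no conceptual obstacle: once Theorem \ref{t:E_M_a} and Corollary \ref{c:k-fold_my} (at the non-trivial scale $k=3$) are in hand, the proof amounts to a single Pl\"{u}nnecke contraction. The only very mild subtlety is that one uses the $m=0$ case of Lemma \ref{l:Plunnecke}; if one insists on the stated form with $m\ge 1$, the same argument with $m=1$ and the trivial $|2^{k}A|\le |2^{k}A-A|$ still works, at the cost of replacing $c=1/16$ by $c=1/18$. The essential point is that any genuine super-linear lower bound on some wider sumset $|2^{k}A|$ with $k\ge 3$ automatically propagates, through Pl\"{u}nnecke--Ruzsa, to a strictly super-quadratic lower bound on $|4A|$ itself; the explicit constant $c=1/16$ is clearly not optimal and could be improved by incorporating bounds on intermediate sumsets $|nA|$ for non-powers of two.
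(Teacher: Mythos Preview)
Your proof is correct, and it takes a genuinely different route from the paper's.

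The paper attacks $|4A|$ directly via the Konyagin ordering trick: it writes
\[
|4A|^{2}\ge\sum_{j}|A_{s_j}+A_{s_j}|\,|A_{s_{j+1}}+A_{s_{j+1}}|
\]
over an ordered subset $S\subseteq A/A$, then invokes an \emph{external} sum--product result (Theorem~11 of \cite{s_sumsets}, giving $|B+B|\gtrsim_{M(B)}|B|^{3/2+c}$) on each $A_{s}$, after a pigeonhole step that forces $|A_s|\gg|A|$ and hence $M(A_s)\ll1$. You instead bootstrap entirely from results already proved in \emph{this} paper --- Theorem~\ref{t:E_M_a} for $|3A|\gtrsim|A|^{2}$ and Corollary~\ref{c:k-fold_my} for $|2^{k}A|\gtrsim|A|^{1+k/2}$ --- and contract them through a single Pl\"unnecke step applied to the pair $(3A,A)$. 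Your argument is thus more self--contained relative to the present paper and yields the explicit value $c=1/16$, whereas the paper's $c$ is inherited from the cited external theorem. Conversely, the paper's approach is conceptually more direct (no detour through $8A$ or $16A$) and would track improvements to the $|B+B|$ exponent automatically.

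Two minor remarks. First, your appeal to Corollary~\ref{c:k-fold_my} does not actually need the detour through $|AA|\ll|A|$, since its hypothesis already allows $|A/A|\le M|A|$. Second, in your $m=1$ fallback the optimal choice is $k=4$ rather than $k=3$, giving $c=1/17$ instead of $1/18$; this does not affect the argument, only the stated constant.
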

\begin{proof}
Without loss  of generality one can suppose that $0\notin A$.
    We use the arguments and the notation of the proof of Proposition \ref{p:k-fold_my_Gowers}.
    By formula (\ref{tmp:25.10.2015_1}), we have
\begin{equation}\label{tmp:25.10.2015_1'}
    |4A|^2 \ge \sum_{j=1}^{r-1} |A_{s_j} + A_{s_j}| |A_{s_{j+1}} + A_{s_{j+1}}|  \,.
\end{equation}
    By Theorem 11 from \cite{s_sumsets} for any finite $B \subset \R$
    one has $|B+B| \gtrsim_{M(B)} |B|^{3/2+c}$, where $c>0$ is an absolute constant.
    Choose our set $S$ such that $\sum_{s\in S} |A_s|^{3+2c} \gtrsim \sum_{s} |A_s|^{3+2c}$
    and for any two numbers  $s,s'$ the quantities $|A_{s}|, |A_{s'}|$ differ at most twice on $S$.
    Clearly, such $S$ exists by the pigeonhole principle.
    Further, put $\D = \min_{s\in S} |A_{s}|$.
    By the H\"{o}lder inequality and our assumption $|A/A| \ll |A|$ one has $\sum_{s} |A_s|^{3+2c} \gg |A|^{4+2c}$ and hence $\D \gg |A|$.
    It follows that $M(A_s) \ll 1$ for any $s\in S$ (see the definition of the quantity $M(A_s)$ in (\ref{fd:E_3_M})).
    Applying Theorem 11 from \cite{s_sumsets} for sets $A_{s_j}$, combining with (\ref{tmp:25.10.2015_1'}) and the previous calculations,
    we obtain
$$
    |4A|^2 \gtrsim \sum_{s\in S} |A_s|^{3+2c} \gtrsim \sum_{s} |A_s|^{3+2c} \gg |A|^{4+2c} \,.
$$
    This completes the proof.
$\hfill\Box$
\end{proof}

\bigskip

The proof
of our last
proposition of this paper uses the same idea as
the arguments
of Theorem \ref{t:E_M_a}
and improves symmetric case of
Lemma 33 from \cite{s_diff} for small $M$.
The result is simple but it shows that for any set with small $|AA|$ or $|A/A|$ there is a "coset"\, splitting, similar to multiplicative subgroups in $\F^*_p$.

\begin{proposition}
    Let $p$ be a prime number and $A \subseteq \F_p$ be a set, $|AA| \ll p^{2/3}$.
    Put $|AA| = M|A|$.
    Then
\begin{equation}\label{f:prod_Ax}
    \max_{x \neq 0} |A\cap (A+x)| \ll M^{9/4} |A|^{3/4} \,.
\end{equation}
    If $|A/A| = M|A|$ and $M^4 |A|^3 \ll p^2$ then
\begin{equation}\label{f:prod_Ax1}
    \max_{x \neq 0} |A\cap (A+x)| \ll M^{3} |A|^{3/4} \,.
\end{equation}
\end{proposition}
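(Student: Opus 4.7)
My plan is to mimic the ``orbit-enlargement'' idea from Theorem~\ref{t:E_M_a} (and Lemma~\ref{l:sigma&E}), combined with the sum--product bound Theorem~\ref{t:sum-prod}. Write $c := A \cap (A+x)$ with $|c|=k$, and set $\Pi := AA$, $Q := A/A$; the aim is to upper bound $k$.

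\emph{Step 1 (dilation).} In the $|AA|\le M|A|$ case, for every $y\in A$ one has
$$
yc \;=\; yA\cap(yA+yx)\;\subseteq\;\Pi\cap(\Pi+yx),
$$
so $|\Pi\cap(\Pi+yx)|\ge k$; as $y$ ranges over $A$ the shift $yx$ takes $|A|$ distinct values in $xA$, and hence
$$
\E^{+}(\Pi)\;=\;\sum_{z}|\Pi\cap(\Pi+z)|^{2}\;\ge\;|A|\,k^{2}.
$$
Using $y^{-1}$ in place of $y$ (and $y^{-1}A\subseteq Q$) gives the analogous lower bound $\E^{+}(Q)\ge |A|\,k^{2}$ in the $|A/A|\le M|A|$ case.

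\emph{Step 2 (sum--product upper bound).} The Ruzsa--Pl\"unnecke triangle inequality $|XYZ|\le |XY||YZ|/|Y|$ yields $|A\Pi|=|AAA|\le |AA|^{2}/|A|=M^{2}|A|$, and (using $|AA|\le M^{2}|A|$, which follows from $|A/A|\le M|A|$ by standard Pl\"unnecke) $|AQ|=|AAA^{-1}|\le |AA||A/A|/|A|\le M^{3}|A|$. Applying Theorem~\ref{t:sum-prod} with triples $(\Pi,A,\Pi)$ and $(Q,A,Q)$ respectively, the required product $|A_{1}||B_{1}||B_{1}C_{1}|$ equals $(M|A|)^{3}$ in case~1 and $M^{4}|A|^{3}$ in case~2, which is exactly $\ll p^{2}$ under the hypotheses $|AA|\ll p^{2/3}$ and $M^{4}|A|^{3}\ll p^{2}$. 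The theorem then gives
$$
\E^{+}(\Pi)\;\ll\;M^{9/2}|A|^{5/2}+M^{5}|A|^{2},\qquad \E^{+}(Q)\;\ll\;M^{6}|A|^{5/2}+M^{7}|A|^{2},
$$
and in each bound the first term dominates once $|A|\gtrsim M$.

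Combining the two steps gives $|A|\,k^{2}\ll M^{9/2}|A|^{5/2}$ in case~1, hence $k\ll M^{9/4}|A|^{3/4}$, and $|A|\,k^{2}\ll M^{6}|A|^{5/2}$ in case~2, hence $k\ll M^{3}|A|^{3/4}$, as claimed.

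The main technical point I expect is the $p^{2}$-constraint in Theorem~\ref{t:sum-prod}: the naive Pl\"unnecke bound $|A^{3}|\le M^{3}|A|$ would inflate the product $|A_{1}||B_{1}||B_{1}C_{1}|$ by an extra factor of $M$ and force $M=O(1)$, so it is precisely the sharper Ruzsa triangle bound $|A^{3}|\le |AA|^{2}/|A|$ (and its $A^{-1}$-analogue) that allows the argument to run for the full range of $M$ permitted by the hypotheses.
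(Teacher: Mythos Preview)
Your approach is essentially identical to the paper's: the dilation argument in Step~1 and the application of Theorem~\ref{t:sum-prod} in Step~2 match exactly.

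The only slip is in Step~2. The claimed inequality $|A\Pi|=|AAA|\le |AA|^2/|A|=M^2|A|$ is not a form of the Ruzsa triangle inequality (which involves at least one difference/quotient), and Pl\"unnecke in the form~\eqref{f:Plunnecke} only yields $|A^3|\le M^3|A|$ for the full set $A$; this would spoil the $p$-constraint check, as you yourself note at the end. The paper fixes this by invoking~\eqref{f:Plunnecke_X} of Lemma~\ref{l:Plunnecke} to pass to a subset $A'\subseteq A$ with $|A'|\ge|A|/2$ and $|A'\Pi|\ll M^2|A|$, and then applies Theorem~\ref{t:sum-prod} with $B=A'$ instead of $B=A$. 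This yields the same bound $\E^+(\Pi)\ll M^{9/2}|A|^{5/2}$ and makes the constraint $|\Pi||A'||A'\Pi|\ll M^3|A|^3\ll p^2$ match the hypothesis $|AA|\ll p^{2/3}$ exactly. In the quotient case your final bound $|AQ|\le M^3|A|$ is correct, but it follows directly from~\eqref{f:Plunnecke} (namely $|A^2A^{-1}|\le M^3|A|$ when $|A/A|\le M|A|$), not from the triangle-type identity you wrote; the paper simply cites~\eqref{f:Plunnecke} here and uses $B=A$ in Theorem~\ref{t:sum-prod}.
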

\begin{proof}
    Without loss  of generality one can suppose that $0\notin A$.
    Let $\Pi = AA$, $Q=A/A$.
    First of all, let us prove (\ref{f:prod_Ax}).
    It is easy to see that for any $x\in \F^*_p$ the following holds
\begin{equation}\label{tmp:13.03.2016_1}
    (A\c A) (x) \le (\Pi \c \Pi) (x/a) \, \quad \quad \mbox{ for all } \quad \quad a \in A \,.
\end{equation}
    Hence
\begin{equation}\label{tmp:12.03.2016_1-}
    (A\c A)^2 (x) \le |A|^{-1} \sum_{a\in A} (\Pi \c \Pi)^2 (x/a) \le |A|^{-1} \sum_{a} (\Pi \c \Pi)^2 (a) =
    |A|^{-1} \E^{+} (\Pi) \,.
\end{equation}
    By Lemma \ref{l:Plunnecke} there is $A'\subseteq A$, $|A'| \ge |A|/2$ such that $|A' \Pi| \ll M^2 |A|$.
    In particular,  $|\Pi| |A'| |A' \Pi| \ll M^3 |A|^3 \ll p^2$.
    Using Theorem \ref{t:sum-prod} with   $A=C=\Pi$ and $B=A'$, we
    get
$$
    \E^{+} (\Pi) \ll M^{9/2} |A|^{5/2} \,.
$$
    Combining  the last bound with (\ref{tmp:12.03.2016_1-}),
    we
    obtain (\ref{f:prod_Ax}).

    To prove (\ref{f:prod_Ax1}), note that the following analog of formula (\ref{tmp:13.03.2016_1}) takes place
\begin{equation}\label{tmp:13.03.2016_1'}
    (A\c A) (x) \le (Q \c Q) (ax) \, \quad \quad \mbox{ for all } \quad \quad a \in A
\end{equation}
    and we can apply the previous arguments.
    In the situation by formula (\ref{f:Plunnecke}) of Lemma \ref{l:Plunnecke} one has $|QA| \le M^3 |A|$ and thus  Theorem \ref{t:sum-prod} with $A=C=Q$ and $B=A$ gives us
$$
    |A| \cdot \max_{x\neq 0} (A\c A)^2 (x) \le \E^{+} (Q) \ll M^{6} |A|^{5/2} \,.
$$
This completes the proof.
$\hfill\Box$
\end{proof} 

\bigskip

\noindent{I.D.~Shkredov\\
Steklov Mathematical Institute,\\
ul. Gubkina, 8, Moscow, Russia, 119991}\\
and
\\
IITP RAS,  \\
Bolshoy Karetny per. 19, Moscow, Russia, 127994\\
{\tt ilya.shkredov@gmail.com}

\end{document}